\lstdefinelanguage{Sage}[]{Python}
{morekeywords={False,sage,True},sensitive=true}
\definecolor{dblackcolor}{rgb}{0.0,0.0,0.0}
\definecolor{dbluecolor}{rgb}{0.01,0.02,0.7}
\definecolor{dgreencolor}{rgb}{0.2,0.4,0.0}
\definecolor{dgraycolor}{rgb}{0.30,0.3,0.30}
\newenvironment{red}{\relax\color{red}}{\relax}
\newenvironment{blue}{\relax\color{blue}}{\hspace*{.5ex}\relax}
\newcommand{\ber}{\begin{red}}
\newcommand{\er}{\end{red}}
\newcommand{\beb}{\begin{blue}}
\newcommand{\eb}{\end{blue}}
\def\CC{{\Bbb C}}
\def\FF{{\Bbb F}}
\def\GG{{\Bbb G}}
\def\ZZ{{\Bbb Z}}
\def\LL{{\Bbb L}}
\theoremstyle{bfupright head,upright body}
\newtheorem{res}{}[section]             \newtheorem*{res*}{}
               \newtheorem*{bfhpg*}{}
\theoremstyle{fixed bf head,slanted body}
\newtheorem{thm}[res]{Theorem}          \newtheorem*{thm*}{Theorem}
\newtheorem{prp}[res]{Proposition}      \newtheorem*{prp*}{Proposition}
            \newtheorem*{lem*}{Lemma}
\theoremstyle{fixed bf head,upright body}
       \newtheorem*{dfn*}{Definition}
           \newtheorem*{rmk*}{Remark}
\newtheorem{conj}[res]{Conjecture}           \newtheorem*{conj*}{Conjecture}
\theoremstyle{numbered paragraph}
\newlength{\thmlistleft}        %leftmargin
\newlength{\thmlistright}       %rightmargin
\newlength{\thmlistpartopsep}   %partopsep
\newlength{\thmlisttopsep}      %topsep
\newlength{\thmlistparsep}      %parsep
\newlength{\thmlistitemsep}     %itemsep
\newcounter{prt}
  {\end{list}}%
\newenvironment{prf*}[1][Proof]{%
  \begin{proof}[\bf #1]
    \setcounter{equation}{0}
    }
  {\end{proof}
}
\renewcommand{\eqref}[1]{\pgref{eq:#1}}
\newcommand{\pgref}[1]{(\ref{#1})}
\def\urltilda{\kern -.15em\lower .7ex\hbox{\~{}}\kern .04em}
\numberwithin{equation}{res}
\newcommand{\Res}{\operatorname{res}}
\def\OOO{{\mathcal O}}
\begin{document}

\title{Some branching formulas for Kac--Moody Lie algebras}

\author[K.-H. Lee]{Kyu-Hwan Lee}

\address{University of Connecticut, Storrs, CT~06269, U.S.A.}

\email{khlee@math.uconn.edu}

\urladdr{http://www.math.uconn.edu/\urltilda khlee}

\thanks{K.-H.L.\ was partially supported by a grant from the Simons Foundation (\#318706).}

\author[J. Weyman]{Jerzy Weyman}

\address{University of Connecticut, Storrs, CT~06269, U.S.A.}

\email{jerzy.weyman@uconn.edu}

\urladdr{http://www.math.uconn.edu/\urltilda weyman}

\thanks{J.W.\ was
  partly supported by NSF DMS grant 1802067.}

\date{\today}

\keywords{Kac--Moody algebras, branching formulas}

\subjclass[2010]{13C99; 13H10.}

\begin{abstract}
 In this paper we give some branching rules for the fundamental representations
of Kac--Moody Lie algebras associated to $T$-shaped graphs. These formulas are useful to describe generators 
of the generic rings  for free resolutions of length three described in \cite{JWm18}.
We also make some conjectures about the generic rings.

\end{abstract}

\maketitle

\thispagestyle{empty}

\section{Introduction}
\label{sec:introduction}

\noindent
Let $T_{p,q,r}$ be a $T$-shaped graph defined as follows.
\[
\raisebox{1em}{\xymatrixcolsep{1.2 pc}\xymatrixrowsep{1pc}\xymatrix{ 
{x_{p-1}} \ar@{-}[r]  & {x_{p-2}} \ar@{-}[r]  
&{\cdots} \ar@{-}[r] &  {x_1} \ar@{-}[r] & {u}\ar@{-}[r]   & {y_1}\ar@{-}[r]& {\cdots}\ar@{-}[r]& {y_{q-2}}\ar@{-}[r]& {y_{q-1}}\ar@{-}[]
\\ &&&& {z_1} \ar@{-}[u] \\ &&&& {\vdots} \ar@{-}[u]\\ &&&& {z_{r-2}} \ar@{-}[u]\\ &&&& {z_{r-1}} \ar@{-}[u] }}
\]
There are two other notations for vertices that we will be using.
Sometimes the indices will be indexed by the set
$\lbrace 0,1,\ldots, p-1, 1',\ldots ,(q-1)', 1'',\ldots ,(r-1)''\rbrace$ corresponding to the vertices $u, x_1 ,\ldots ,x_{p-1}$, $y_1,\ldots ,y_{q-1}$, $z_1,\ldots ,z_{r-1}$ respectively.
Sometimes we denote vertices by natural numbers from $[1,p+q+r-2]$, in the order listed above.

The main result of \cite{JWm18} associates to every graph $T_{p,q,r}$ the format of free resolutions of length three over commutative rings, and constructs a particular generic ring ${\hat R}_{gen}$  for the resolutions of that format which deforms to a ring ${\hat R}_{spec}$ on which the Kac--Moody Lie algebra ${\mathfrak g}(T_{p,q,r})$ associated to $T_{p,q,r}$ acts.
The correspondence is as follows. For the free resolutions of format $(r_1,r_1+r_2, r_2+r_3, r_3)$, i.e. those with the ranks of differentials given by $(r_1, r_2, r_3)$, we have
$$(p,q,r)= (r_1+1, r_2-1, r_3+1).$$
In this context it is important to consider the grading on ${\mathfrak g}(T_{p,q,r})$ associated to the simple root corresponding to the vertex $z_1$; more precisely
$${\mathfrak g}(T_{p,q,r})=\bigoplus_{i\in\ZZ}{\mathfrak g}_i(T_{p,q,r})$$
where ${\mathfrak g}_i(T_{p,q,r})$ is the span of roots where the simple root corresponding to vertex $z_1$ occurs with multiplicity $i$.

The generators of the ring ${\hat R}_{gen}$ involve the fundamental representations of the algebra ${\mathfrak g}(T_{p,q,r})$. Especially important are those corresponding to extremal vertices $x_{p-1}$, $y_{q-1}$, $z_{r-1}$. Thus it is important to find the restrictions of these representations to the Lie subalgebra of ${\mathfrak g}(T_{p,q,r})$ corresponding to the graph we obtain from $T_{p,q,r}$ by omitting the vertex $z_1$.

In this note we calculate these restrictions. Note that the graph $T_{p,q,r}$ with the node $z_1$ removed is a disjoint union of two Dynkin graphs of types $A_{p+q-1}$ and of type $A_{r-2}$. We denote these Lie algebras by $\mathfrak {sl}(F_1)$ and $\mathfrak {sl}(F_3)$ where $F_1$ is a vector space over $\CC$ of dimension $p+q$ and $F_3$ is a vector space over $\CC$ of dimension $r-1$. We also denote by $\mathfrak {gl}(F_1)$ and $\mathfrak {gl}(F_3)$ the corresponding general linear Lie algebras.

This paper is organized as follows. In Section \ref{section:results} we recall the results of \cite{JWm18}, and  list the formulas for the restrictions of representations we need  for the tables in the following sections, and state some conjectures. We employ the Bourbaki notation, but always present the graph $T_{p,q,r}$ as above.
In Section \ref{section:dn} we deal with the graphs of type $D_n$. In Section \ref{section:e6} we deal with $E_6$, in Section \ref{section:e7} with $E_7$ and in Section \ref{section:e8} with $E_8$. 
In the last section we illustrate the usefulness of the tables by observing some general patterns from the tables and discussing some plausible conjectures.

\subsection*{Acknowledgments} We thank the anonymous referee for useful comments and suggestions. J.W.\ would like to thank  
Lars~W. Christensen and Oana Veliche for helpful discussions
regarding the material of this paper. K.-H.L.\ would like to thank Ben Salisbury and Travis Scrimshaw for their help with SageMath.

\section{The free resolutions of length $3$ and Kac--Moody algebras related to $T$-shaped graphs}
\label{section:results}

The problem motivating calculations presented in this note has to do with generic rings for finite free resolutions. For a partition $\lambda$, we denote by $S_\lambda$ the Schur functor associated with $\lambda$. In particular, $S_n$ denotes the symmetric power. The exterior power  will be denoted by $\bigwedge^n$. 

We consider the free acyclic complexes  $\FF_\bullet$ (i.e complexes whose only (possible) nonzero homology group is $H_0(\FF_\bullet)$) of the form
$$\FF_\bullet:\ 0\rightarrow F_3\rightarrow F_2\rightarrow F_1\rightarrow F_0$$
over  commutative Noetherian rings $R$,
with $rank\ F_i=f_i$ ($0\le i\le 3$), $rank (d_i)=r_i$ ($1\le i\le 3$). The quadruple $(f_0, f_1, f_2, f_3)$ is {\it the format} of the complex $\FF_\bullet$. 
We always have $f_i=r_i+r_{i+1}$ ($0\le i\le 3$).

For the resolutions of such format $(f_0, f_1, f_2, f_3)$ we say that a pair $(R_{gen}, \FF^{gen}_\bullet)$ where $R_{gen}$ is a commutative ring and $\FF^{gen}_\bullet$ is a  free  acyclic complex over $R_{gen}$ is {\it a generic resolution} of this format
if two conditions are satisfied:

\begin{enumerate}
\item The complex $\FF^{gen}_\bullet$ is acyclic over $R_{gen}$;
\item For every acyclic free complex $\GG_\bullet$ over a Noetherian ring $S$ there exists a ring homomorphism $\phi:R_{gen}\rightarrow S$ such that
$$\GG_\bullet =\FF^{gen}_\bullet\otimes_{R_{gen}}S.$$
\end{enumerate}

Of particular interest is whether the ring $R_{gen}$ is Noetherian, because it can be shown quite easily that a non-Noetherian (non-unique) generic pair always exists.
For complexes of length $2$ the existence of the pairs $(R_{gen}, \FF^{gen}_\bullet)$ was established by Hochster (\cite{H75}). He also proved that this generic ring is Noetherian.

In \cite{JWm18} the particular  generic rings ${\hat R}_{gen}$ were constructed for all formats $(f_0, f_1, f_2, f_3)$. 
The main result of \cite{JWm18} is (Theorem 9.1 there)

\begin{thm} For every format $(f_0, f_1, f_2, f_3)$ there exists a generic pair
$$({\hat R}_{gen}, \FF^{gen}_\bullet):=((j_{gen})_* \OOO_{X_{gen}\setminus D_3}, \FF^a_\bullet\otimes_{R_a} (j_{gen})_* \OOO_{X_{gen}\setminus D_3}).$$
The generic ring ${\hat R}_{gen}$ deforms to a ring ${\hat R}_{spec}$ which carries a multiplicity free action of ${\mathfrak g}(T_{p,q,r})\times \mathfrak {gl}(F_2)\times \mathfrak {gl}(F_0)$, where $f_3=r-1, f_2=q+r, f_1=p+q, r_1=p-1$.
If the defect Lie algebra $\LL(r_1+1, F_3, F_1)$ is finite dimensional, then the generic ring ${\hat R}_{gen}$  is Noetherian. 
\end{thm}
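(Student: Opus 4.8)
The plan is to build $\hat R_{gen}$ in two stages and then to analyse it representation-theoretically, extending the circle of ideas of Hochster \cite{H75}, Bruns, and Buchsbaum--Eisenbud. First I would form the ring $R_a$ that is generic for the \emph{first} structure theorem: start from the polynomial ring on the entries of generic matrices $d_1,d_2,d_3$ of the sizes dictated by the format, impose $d_id_{i+1}=0$, and adjoin the Buchsbaum--Eisenbud multiplier maps $a_i\colon\bigwedge^{r_i}F_i\to R$ together with the relations that recover each $d_i$ from the $a_j$; write $\FF^a_\bullet$ for the tautological complex over $R_a$. The Buchsbaum--Eisenbud structure theorems already show that any acyclic free complex $\GG_\bullet$ over a Noetherian ring $S$ induces, through its own structure maps, a homomorphism $R_a\to S$ with $\GG_\bullet=\FF^a_\bullet\otimes_{R_a}S$; the two defects to be fixed are that $\FF^a_\bullet$ need not be acyclic over $R_a$, and that $R_a$ need not be Noetherian.

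To repair acyclicity I would pass to a birational model $X_{gen}\to\operatorname{Spec}R_a$ --- an explicit tower of Grassmannian and flag bundles, one flange for each of the three arms of $T_{p,q,r}$ --- on which the sheafified complex extends, let $D_3\subset X_{gen}$ be the closed locus where the exactness criterion for $\FF^a_\bullet$ fails (where the grade of the ideal of maximal minors of $d_3$, and the analogous conditions lower down, drop below the values $r_3,r_2,\dots$), and set $\hat R_{gen}=(j_{gen})_*\OOO_{X_{gen}\setminus D_3}$ with $j_{gen}$ the open immersion, as in the statement. Then $\FF^{gen}_\bullet=\FF^a_\bullet\otimes_{R_a}\hat R_{gen}$ is acyclic by the Buchsbaum--Eisenbud exactness criterion, since on $X_{gen}\setminus D_3$ the requisite grade conditions hold by the very choice of $D_3$. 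For the universal property, an acyclic $\GG_\bullet$ over Noetherian $S$ yields a map $\operatorname{Spec}S\to\operatorname{Spec}R_a$; the rank and depth constraints that acyclicity of $\GG_\bullet$ imposes allow this map to be lifted to $\operatorname{Spec}S\to X_{gen}\setminus D_3$, and pulling back $\OOO$ gives the required $\hat R_{gen}\to S$ with $\GG_\bullet=\FF^{gen}_\bullet\otimes_{\hat R_{gen}}S$.

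Next I would study the $\ZZ$-grading on $\hat R_{gen}$ attached to the simple root at the vertex $z_1$ (the grading singled out in the introduction): filtering by it and passing to the associated graded realizes $\hat R_{gen}$ as a flat degeneration of a ring $\hat R_{spec}$, on which $X_{gen}$ degenerates to an open piece of the Kac--Moody flag scheme of ${\mathfrak g}(T_{p,q,r})$, fibered over $\operatorname{Spec}$ of the polynomial ring in the $\mathfrak{gl}(F_2)\times\mathfrak{gl}(F_0)$ variables. By the Kac--Moody Borel--Weil theorem (Kumar, Mathieu) its ring of sections decomposes as
\[
\hat R_{spec}\ \cong\ \bigoplus_{\lambda} V(\lambda)\otimes W_\lambda ,
\]
a sum over dominant integral weights $\lambda$ of ${\mathfrak g}(T_{p,q,r})$, each occurring with multiplicity one, with $W_\lambda$ the matching $\mathfrak{gl}(F_2)\times\mathfrak{gl}(F_0)$-module; the Kac--Moody algebra then acts by its canonical action on $\bigoplus_\lambda V(\lambda)$, commuting with $\mathfrak{gl}(F_2)\times\mathfrak{gl}(F_0)$, and multiplicity-freeness drops out of a Borel--Weil--Bott/PRV-type vanishing on the flag scheme. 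The hard part, I expect, is exactly here: identifying which $\lambda$ occur and, above all, the algebra structure --- the multiplication maps $V(\lambda)\otimes V(\mu)\to\bigoplus V(\nu)$ --- which requires understanding the fundamental representations and their restrictions to $\mathfrak{sl}(F_1)\times\mathfrak{sl}(F_3)$ explicitly, i.e.\ precisely the branching formulas worked out in the rest of this paper.

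Finally, for Noetherianity: the same branching analysis should show that $\hat R_{spec}$ is generated over its base by the isotypic components attached to the fundamental weights at the extremal vertices $x_{p-1},y_{q-1},z_{r-1}$, and that the number of $\lambda$ contributing in each $z_1$-degree is governed by the defect Lie algebra $\LL(r_1+1,F_3,F_1)$, the part of ${\mathfrak g}(T_{p,q,r})$ in nonzero $z_1$-degree. Thus when $\LL(r_1+1,F_3,F_1)$ is finite dimensional --- the Dynkin cases treated in the sections below --- a Hilbert-series count makes $\hat R_{spec}$ a finitely generated algebra, hence Noetherian, and flatness of the degeneration carries Noetherianity back to $\hat R_{gen}$.
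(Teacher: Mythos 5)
This theorem is not proved in the paper you were given: it is Theorem~9.1 of the reference \cite{JWm18}, quoted verbatim so that the branching computations in the later sections have a context. There is therefore no proof in this paper to compare your argument against, and any assessment has to be of your sketch on its own terms. Your outline does capture the broad architecture of the known argument --- Buchsbaum--Eisenbud multipliers giving $R_a$ and the tautological complex $\FF^a_\bullet$, a geometric model $X_{gen}$ with an open locus where the exactness criterion holds, the pushforward definition of ${\hat R}_{gen}$, a degeneration to ${\hat R}_{spec}$ governed by the $z_1$-grading, and Noetherianity controlled by the defect Lie algebra --- and it is consistent with the decomposition of ${\hat R}_{spec}$ recorded in Proposition~\pgref{prp-decrgen} and the generator list in Proposition~\pgref{prp-aa}.

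That said, as a proof it is essentially a table of contents: every genuinely hard step is asserted rather than established. Concretely: (i) the existence of a scheme $X_{gen}$ on which the complex becomes acyclic and over which the universal property lifts is the central construction of \cite{JWm18} and cannot be dispatched by saying ``a tower of Grassmannian and flag bundles''; the lifting of $\operatorname{Spec}S\to\operatorname{Spec}R_a$ into $X_{gen}\setminus D_3$ is exactly where genericity is proved and needs an argument. (ii) The claim that the associated graded of the $z_1$-filtration carries a full ${\mathfrak g}(T_{p,q,r})$-action (not merely an action of the degree-zero Levi $\mathfrak{gl}(F_1)\times\mathfrak{gl}(F_3)$) is the key structural theorem; invoking ``Kac--Moody Borel--Weil'' presupposes that ${\hat R}_{spec}$ is realized on a Kac--Moody flag scheme, which is what must be shown. (iii) Multiplicity-freeness and the precise indexing of the weights $\lambda(\sigma,\tau,a)$ by the sextuples $\mu$ do not ``drop out'' of a vanishing theorem; they are computed. (iv) For Noetherianity, a Hilbert-series count does not by itself yield finite generation; the actual mechanism is that finite-dimensionality of the defect Lie algebra forces the generating representations of Proposition~\pgref{prp-aa} to be finite dimensional. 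If you intend to use this theorem, cite \cite{JWm18} rather than reproving it; if you intend to prove it, each of the four points above needs to be expanded into a real argument.
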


It is natural to call the format $(f_0, f_1, f_2, f_3)$ Dynkin  if the diagram $T_{p,q,r}$ is a Dynkin graph.
The rings  ${\hat R}_{gen}$ have an explicit decomposition to representations of $\prod_{i=0}^3 GL(F_i)$ which we now describe.

For a sextuple $\mu =(a,b,c,\alpha, \beta, \gamma )$ with $a\ge 0$, where $\alpha$ is a partition with $\le r_3-1$ parts, $\beta$ is a partition with $\le r_2$ parts and $\gamma$ is a partition with $\le r_1-1$ parts,
we define 
\begin{align*}
\sigma (\mu)&:= (a-b+c+\alpha_1 ,\ldots ,a-b+c+\alpha_{r_3-1}, a-b+c),\\
\tau(\mu)&:=(c+\gamma_1 ,\ldots ,c+\gamma_{r_1-1}, c, c-b, c-b-\beta_{r_2-1},\ldots ,c-b-\beta_1),\\
\theta(\mu)&:= (b-c+\beta_1 ,\ldots, b-c+\beta_{r_2 -1}, b-c, -a+b-c, -a+b-c-\alpha_{r_3-1},\ldots, -a+b-c-\alpha_1),\\
\phi(\mu)&:= (0^{f_0-r_1}, -c, -c-\gamma_{r_1-1},\ldots ,-c-\gamma_1).
\end{align*}

The Buchsbaum--Eisenbud  multipliers ring (see \cite{JWm18}) has a decomposition
$$R_a=\oplus_{\mu} S_{\phi(\mu)} F_0\otimes S_{\tau(\mu)} F_1\otimes S_{\theta(\mu )}F_2\otimes S_{\sigma(\mu)} F_3.$$

For given $\alpha, \beta, t$ we define the weight $\lambda (\sigma, \tau, t)$ of ${\mathfrak g}(T_{p,q,r})$ as follows.
We label the vertices of $T_{p,q,r}$ on the third arm by the coefficients of fundamental weights in $\sigma$, i.e.
$$\lambda_{p+q+i}=\sigma_{r-1-i}-\sigma_{r-i}$$
for $i=1,\ldots, r-2$.
We also label the vertices at the center and the first two arms by coefficients of fundamental weights in $\tau$,
i.e.
$$\lambda_0=\tau_p-\tau_{p+1},$$
$$ \lambda_i=\tau_{p-i}-\tau_{p-i+1}$$
for $1\le i\le p-1$, and
$$\lambda_i=\tau_i-\tau_{i+1}$$
for $i=p+1,\ldots, p+q-1$.
Finally, we put
$$\lambda_{p+q}=a.$$
Finally we set $t:=a+1$.

Then we have (\cite{JWm18}, Proposition 9.3)

\begin{prp}\label{prp-decrgen} 
We have an $\mathfrak {gl}(F_0)\times \mathfrak {gl}(F_2)\times {\mathfrak g}(T_{p,q,r})$ decomposition of a deformation ${\hat R}_{spec}$ of ${\hat R}_{gen}$
$${\hat R}_{spec}=\oplus_{\mu} S_{\phi(\mu)} F_0\otimes S_{\theta(\mu )}F_2\otimes V_{\lambda (\sigma(\mu), \tau(\mu), a)}$$
where $V_{\lambda}$ is the irreducible lowest weight module of weight $\lambda$ for ${\mathfrak g}(T_{p,q,r})$.
The module $V_\lambda$ is the highest weight representation for the opposite Borel subalgebra.
It is also irreducible.
The ring ${\hat R}_{gen}$ has a decomposition as a representation of $\prod_{i=0}^3 GL(F_i)$
$${\hat R}_{gen}=\oplus_{\mu} S_{\phi(\mu)} F_0\otimes S_{\theta(\mu )}F_2\otimes \Res (V_{\lambda (\sigma(\mu), \tau(\mu), a)})$$
where $\Res$ denotes the restriction from ${\mathfrak g}(T_{p,q,r})$ to $\mathfrak {gl}(F_1)\times \mathfrak {gl}(F_3)$.
\end{prp}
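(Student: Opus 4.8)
The plan is to bootstrap from two structures already in place: the $\prod_{i=0}^{3}GL(F_i)$-decomposition of the Buchsbaum--Eisenbud multiplier ring $R_a$ recalled above, and Theorem~9.1 of \cite{JWm18} (recalled above), which provides a flat degeneration of $\hat R_{gen}$ to a ring $\hat R_{spec}$ carrying a \emph{multiplicity free} action of $\mathfrak g(T_{p,q,r})\times\mathfrak{gl}(F_2)\times\mathfrak{gl}(F_0)$ extending the $\prod GL(F_i)$-action. First I would equip $\hat R_{spec}$ with the $\ZZ$-grading by the multiplicity of the simple root attached to the vertex $z_1$, compatibly with $\mathfrak g(T_{p,q,r})=\bigoplus_i\mathfrak g_i(T_{p,q,r})$; the Levi subalgebra $\mathfrak g_0$ is $\mathfrak{sl}(F_1)\oplus\mathfrak{sl}(F_3)$ together with a one-dimensional centre. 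The structural point is that the decomposition of $R_a$ is the $\mathfrak g_0$-level input, each $GL(F_1)\times GL(F_3)$-summand $S_{\tau(\mu)}F_1\otimes S_{\sigma(\mu)}F_3$ of $R_a$ being the extreme $\mathfrak g_0$-isotypic layer of exactly one irreducible $\mathfrak g(T_{p,q,r})$-constituent of $\hat R_{spec}$, which that constituent generates.

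Second, for each sextuple $\mu=(a,b,c,\alpha,\beta,\gamma)$ I would pick out inside $S_{\phi(\mu)}F_0\otimes S_{\tau(\mu)}F_1\otimes S_{\theta(\mu)}F_2\otimes S_{\sigma(\mu)}F_3$ the lowest-weight vector for $\mathfrak g(T_{p,q,r})$ (its highest-weight vector in the opposite-Borel convention): the tensor of the $GL(F_0)\times GL(F_2)$-factors with the extreme weight vectors of $S_{\tau(\mu)}F_1$ and $S_{\sigma(\mu)}F_3$. The core computation of this step is to express its $\mathfrak g(T_{p,q,r})$-weight in fundamental-weight coordinates via the embedding of the Cartan of $\mathfrak{gl}(F_1)\times\mathfrak{gl}(F_3)$ into that of $\mathfrak g(T_{p,q,r})$: the labels along the two $A$-arms and the central vertex come out to be the consecutive differences of the parts of $\tau(\mu)$, namely $\lambda_0=\tau_p-\tau_{p+1}$, $\lambda_i=\tau_{p-i}-\tau_{p-i+1}$ for $1\le i\le p-1$ and $\lambda_i=\tau_i-\tau_{i+1}$ for $p+1\le i\le p+q-1$; the labels along the $z$-arm past $z_1$ come out to be $\lambda_{p+q+i}=\sigma_{r-1-i}-\sigma_{r-i}$; and the grading pins the label at $z_1$ to $\lambda_{p+q}=a$. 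This is exactly the weight $\lambda(\sigma(\mu),\tau(\mu),a)$ defined above.

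Third, I would generate and reassemble. Applying $\mathfrak g(T_{p,q,r})$ to each such lowest-weight vector, and using that the action is multiplicity free while $R_a$ generates $\hat R_{spec}$, forces the first displayed decomposition in the statement, with $V_{\lambda(\sigma(\mu),\tau(\mu),a)}$ replaced a priori by the cyclic $\mathfrak g(T_{p,q,r})$-module $W_\mu$ generated by the lowest-weight vector of weight $\lambda(\sigma(\mu),\tau(\mu),a)$. Being cyclic on an extreme weight space, $W_\mu$ is a quotient of the corresponding Verma module (for the opposite Borel); to upgrade this to ``$W_\mu$ is the irreducible module $V_{\lambda(\sigma(\mu),\tau(\mu),a)}$'' I would compare formal characters of the two sides, using that $\operatorname{ch}\hat R_{spec}=\operatorname{ch}\hat R_{gen}$ and that the latter is computable from the realization $\hat R_{gen}=(j_{gen})_*\OOO_{X_{gen}\setminus D_3}$. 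Finally, since the degeneration of $\hat R_{spec}$ to $\hat R_{gen}$ is flat and $\prod_iGL(F_i)$-equivariant it preserves $GL(F_1)\times GL(F_3)$-characters; restricting the decomposition of $\hat R_{spec}$ along $\mathfrak{gl}(F_1)\times\mathfrak{gl}(F_3)\hookrightarrow\mathfrak g(T_{p,q,r})$ then yields the asserted decomposition of $\hat R_{gen}$ with $\Res$.

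The main obstacle is the last part of the third step: ruling out that some $W_\mu$ is a proper Verma quotient. For the Dynkin formats this is immediate from finite-dimensionality and integrability, but in the genuinely infinite-dimensional Kac--Moody cases it rests on the fine structure of the generic ring in \cite{JWm18} --- in particular on control of $(j_{gen})_*\OOO_{X_{gen}\setminus D_3}$ near the exceptional divisor $D_3$ --- and there I would quote the relevant statements rather than re-derive them. A secondary, purely clerical difficulty is the precise matching in the second step between the partition/$GL$-weight data carried by $\tau(\mu)$ and $\sigma(\mu)$ and the fundamental-weight labels $\lambda_i$, including the normalizations $\lambda_{p+q}=a$ and $t=a+1$.
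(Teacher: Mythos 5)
The paper does not prove this statement at all: Proposition~\ref{prp-decrgen} is imported verbatim from \cite{JWm18} (Proposition~9.3 there), so there is no in-paper argument to compare your proposal against. Judged on its own terms, your outline is a sensible reconstruction of how such a statement is established: grade $\hat R_{spec}$ by the multiplicity of the simple root at $z_1$, recognize the Buchsbaum--Eisenbud ring $R_a$ as the degree-zero layer whose $GL(F_1)\times GL(F_3)$-summands supply the extreme (lowest-)weight vectors, match the consecutive differences of the parts of $\tau(\mu)$ and $\sigma(\mu)$ with the fundamental-weight labels $\lambda_i$ and pin $\lambda_{p+q}=a$ via the grading, then use multiplicity-freeness to assemble the direct sum and restrict to $\mathfrak{gl}(F_1)\times\mathfrak{gl}(F_3)$ to pass from $\hat R_{spec}$ to $\hat R_{gen}$.

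The problem is the step you yourself flag as the ``main obstacle'': showing that the cyclic module $W_\mu$ generated by an extreme weight vector is the irreducible $V_{\lambda(\sigma(\mu),\tau(\mu),a)}$ rather than a proper quotient of a (generalized) Verma module --- and, prior to that, that a deformation of $\hat R_{gen}$ carrying a multiplicity-free $\mathfrak g(T_{p,q,r})$-action with exactly these constituents exists at all. That is precisely the content of the cited result, and your plan resolves it by ``quoting the relevant statements'' from \cite{JWm18}, i.e.\ from the very source the proposition is taken from; as a self-contained proof the proposal is therefore circular at the only point where real work is required. The character comparison you offer as a fallback ($\operatorname{ch}\hat R_{spec}=\operatorname{ch}\hat R_{gen}$, the latter ``computable'' from $(j_{gen})_*\OOO_{X_{gen}\setminus D_3}$) is not an independent route either: computing that character near $D_3$ is the hard geometric analysis of \cite{JWm18}. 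Your treatment is an accurate reading of the literature and mirrors what the present paper does, but it should be framed as a citation accompanied by a plausibility sketch, not as a proof.
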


Let us denote by $W(\mu)$ the isotypic component $S_{\phi(\mu)} F_0\otimes S_{\theta(\mu )}F_2\otimes \Res(V_{\lambda (\sigma(\mu), \tau(\mu), a)})$.
The representation $W(\mu)$ is equipped with the grading 
$$W(\mu)=\oplus_{i\ge 0}W(\mu)(i)$$
induced by the grading
$$V_{\lambda (\sigma(\mu), \tau(\mu), a)}=\oplus_{i\ge 0} V_{\lambda (\sigma(\mu), \tau(\mu), a)}(i).$$
Notice that the graded summand $W(\mu)(0)$ gives the corresponding summand in $R_a$.

One also has the description of the generators of ${\hat R}_{gen}$ (\cite{JWm18}, Proposition 10.1).

\begin{prp} \label{prp-aa}
The generators of the semigroup of weights in ${\hat R}_{gen}$ are as follows:
\begin{enumerate}
\item $\alpha=(1^i), \beta=\gamma=a=b=c=0$, for $1\le i\le r_3-1$,
\item $a=1$, $\alpha=\beta=\gamma=b=c=0$,
\item $\beta=(1^j), \alpha=\gamma=a=b=c=0$, for $1\le j\le r_2-1$,
\item $b=1$, $\alpha=\beta=\gamma=a=c=0$,
\item $\gamma=(1^k), \alpha=\beta=a=b=c=0$, for $1\le k\le r_1-1$,
\item $c=1$, $\alpha=\beta=\gamma=a=b=0$.
\end{enumerate}
The rings ${\hat R}_{spec}$ and ${\hat R}_{gen}$ are generated by the corresponding representations $W(\mu)$.
\end{prp}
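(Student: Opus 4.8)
The plan is to treat the two assertions separately: first pin down the monoid $\Sigma$ of sextuples $\mu$ that index nonzero summands and show it is generated by the six families, and then show that the corresponding $W(\mu)$ generate the ring; only the second assertion has a non-formal core.

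For the monoid: by Proposition~\ref{prp-decrgen} the summand $W(\mu)=S_{\phi(\mu)}F_0\otimes S_{\theta(\mu)}F_2\otimes\Res\big(V_{\lambda(\sigma(\mu),\tau(\mu),a)}\big)$ is nonzero exactly when $\sigma(\mu)$, $\tau(\mu)$, $\theta(\mu)$, $\phi(\mu)$ are dominant integral weights of $\mathfrak{gl}(F_3)$, $\mathfrak{gl}(F_1)$, $\mathfrak{gl}(F_2)$, $\mathfrak{gl}(F_0)$ of the prescribed lengths — once the four Schur modules are nonzero, $V_\lambda$ and hence $\Res(V_\lambda)$ is automatically nonzero. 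Writing out the weak-decrease inequalities in the four definitions, they collapse to precisely $a\ge 0$, $b\ge 0$, $c\ge 0$ together with $\alpha,\beta,\gamma$ being partitions with at most $r_3-1$, $r_2-1$, $r_1-1$ parts. Because $\sigma,\tau,\theta,\phi$ are linear in $\mu$ (adding partitions entrywise) and a sum of partitions with $\le m$ parts still has $\le m$ parts, this set $\Sigma$ is a submonoid of the weight lattice. Now apply the standard facts that $\ZZ_{\ge 0}$ is generated by $1$ and that the monoid of partitions with $\le m$ parts is freely generated by the columns $(1^i)$, $1\le i\le m$ (via $\alpha=\sum_i(\alpha_i-\alpha_{i+1})(1^i)$), in each of the coordinates $a,b,c,\alpha,\beta,\gamma$; this yields exactly the list (1)--(6).

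For the ring: it is enough to prove $W(\mu'+\mu'')\subseteq W(\mu')\cdot W(\mu'')$ for all $\mu',\mu''\in\Sigma$, since then induction on $a+b+c+|\alpha|+|\beta|+|\gamma|$ puts every $W(\mu)$ into the subalgebra generated by $W(\mu_1),\dots,W(\mu_6)$ — a subalgebra that is a $G$-submodule because $G$ acts by derivations, hence all of the ring. I would prove this for $\hat R_{spec}$ and transfer: $\hat R_{gen}$ and $\hat R_{spec}$ have the same $\prod_{i=0}^{3}GL(F_i)$-character by Proposition~\ref{prp-decrgen}, so a standard flatness/Nakayama argument on the deformation (graded piece by graded piece, matching isotypic components) carries a system of algebra generators of $\hat R_{spec}$ to one of $\hat R_{gen}$. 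On $\hat R_{spec}$ the group $G:={\mathfrak g}(T_{p,q,r})\times\mathfrak{gl}(F_0)\times\mathfrak{gl}(F_2)$ acts, the action is multiplicity free, and $W(\mu)$ is the irreducible lowest-weight module of some weight $\nu(\mu)$; from the explicit formulas $\mu\mapsto\nu(\mu)$ is additive (linearity of $\sigma,\tau,\theta,\phi$) and injective ($\phi(\mu)$ recovers $c$ and $\gamma$; the $\mathfrak g$-weight $\lambda(\mu)$ recovers $a$, then $b$, then $\alpha$ and $\beta$). Picking $G$-lowest-weight vectors $v_{\mu'}\in W(\mu')$ and $v_{\mu''}\in W(\mu'')$, the product $v_{\mu'}v_{\mu''}$ is again annihilated by $\mathfrak{n}^{-}$ (derivations), of weight $\nu(\mu')+\nu(\mu'')=\nu(\mu'+\mu'')$; by multiplicity-freeness and injectivity of $\nu$, the space of $\mathfrak{n}^{-}$-annihilated vectors of that weight is the line spanned by $v_{\mu'+\mu''}$, so $v_{\mu'}v_{\mu''}\in\CC\,v_{\mu'+\mu''}$. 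Since $\hat R_{spec}$ is an integral domain this product is nonzero, and then $W(\mu'+\mu'')=U(G)\cdot v_{\mu'+\mu''}\subseteq W(\mu')\cdot W(\mu'')$, completing the induction.

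In this scheme everything is formal once one knows $\hat R_{spec}$ is a domain, so that is the ingredient I would import from \cite{JWm18} (where, like $\hat R_{gen}=(j_{gen})_*\OOO_{X_{gen}\setminus D_3}$, it is the coordinate ring of an integral scheme), and I also regard as part of the real work the check that the deformation connecting $\hat R_{gen}$ to $\hat R_{spec}$ respects the grading by the simple root at $z_1$, so that the transferred generators genuinely land in the claimed pieces $W(\mu_k)$. If one prefers not to invoke $\hat R_{spec}$ at all, the same domain argument applied to $\hat R_{gen}$ itself — using only the $\prod_i GL(F_i)$-action, for which the degree-zero subring $R_a=\hat R_{gen}(0)$ is already multiplicity free — gives generation of $R_a$ by the pieces $W(\mu_k)(0)$ directly, and one is then left to show that the positive-degree part of $\hat R_{gen}$ is generated over $R_a$ by the pieces $W(\mu_k)(i)$ with $i>0$; that last step really uses the geometry of $X_{gen}\setminus D_3$ and is, I expect, the main obstacle.
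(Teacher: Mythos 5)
This paper does not prove the statement: it is quoted directly from \cite{JWm18} (Proposition 10.1 there), so there is no in-paper argument to compare yours against. Judged on its own, the first half of your proposal --- identifying the index monoid as $\ZZ_{\ge 0}^3$ times three truncated partition monoids and generating it by the column partitions $(1^i)$ together with $a=1$, $b=1$, $c=1$ --- is correct and is exactly the combinatorial content of items (1)--(6): the dominance conditions on $\sigma,\tau,\theta,\phi$ do reduce to $a,b,c\ge 0$ plus the partition conditions (you silently read ``$\beta$ with $\le r_2$ parts'' as ``$\le r_2-1$ parts,'' which is forced, since $\beta_{r_2}$ never enters $\tau$ or $\theta$, and is consistent with the list stopping at $j=r_2-1$).

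The second half is where the substance lies, and your Cartan-multiplication argument hinges on two inputs you do not establish: that ${\hat R}_{spec}$ is a domain (or at least that the product of the two chosen extremal vectors is nonzero there), and that the deformation from ${\hat R}_{gen}$ to ${\hat R}_{spec}$ is of associated-graded type, compatible with the isotypic decomposition, so that algebra generators lift. The first is a genuine assertion, not a formality: ${\hat R}_{spec}$ is a degeneration of ${\hat R}_{gen}$, and degenerations of domains need not be domains, so non-vanishing of $v_{\mu'}v_{\mu''}$ must come from the specifics of the construction in \cite{JWm18}. You rightly note that arguing in ${\hat R}_{gen}$ directly fails because the $\prod_i GL(F_i)$-action there is not multiplicity free in positive $z_1$-degree (the tables in Sections 3--5 exhibit multiplicities greater than $1$), and you honestly flag the positive-degree generation over $R_a$ as the unresolved obstacle. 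In \cite{JWm18} the statement is instead extracted from the explicit construction of ${\hat R}_{gen}$, which is built from the Buchsbaum--Eisenbud multiplier ring $R_a$ by adjoining specific representations, so the generators are read off rather than produced by an abstract multiplicity-freeness argument. In short: your strategy is a plausible alternative route, but as written it has a real gap at the non-vanishing of products of extremal vectors in ${\hat R}_{spec}$, precisely the point that cannot be settled from the material reproduced in this paper.
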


In fact we expect that the ring ${\hat R}_{gen}$ is generated by much smaller set of representations (see \cite{JWm18} for motivation).

\begin{conj}\label{conj:mingen} The ring ${\hat R}_{gen}$ is generated by the six representations $W(\mu)$ corresponding to the following weights.
\begin{enumerate}
\item $\alpha=(1), \beta=\gamma=a=b=c=0$, corresponding to $i=1$ in Proposition \ref{prp-aa} (1),
\item $a=1$, $\alpha=\beta=\gamma=b=c=0$,
\item $\beta=(1), \alpha=\gamma=a=b=c=0$, corresponding to $j=1$ in Proposition \ref{prp-aa} (3),
\item $b=1$, $\alpha=\beta=\gamma=a=c=0$,
\item $\gamma=(1), \alpha=\beta=a=b=c=0$,corresponding to $k=1$ in Proposition \ref{prp-aa} (5),
\item $c=1$, $\alpha=\beta=\gamma=a=b=0$.
\end{enumerate}
\end{conj}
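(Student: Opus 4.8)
The plan is to prove that every generator appearing in Proposition~\ref{prp-aa} lies in the subring $A\subseteq{\hat R}_{gen}$ generated by the six representations $W(\mu)$ of the conjecture. The families $(2),(4),(6)$ together with the cases $i=1$ of $(1)$, $j=1$ of $(3)$ and $k=1$ of $(5)$ are exactly those six, so what remains is to show that the ``higher exterior power'' generators --- $W(\mu)$ with $\alpha=(1^{i})$ for $2\le i\le r_{3}-1$, with $\beta=(1^{j})$ for $2\le j\le r_{2}-1$, and with $\gamma=(1^{k})$ for $2\le k\le r_{1}-1$ --- already lie in $A$. Relabelling the arms of $T_{p,q,r}$ interchanges the three cases, so I would treat only the first; write $\mu_{i}$ for the sextuple with $\alpha=(1^{i})$ and all remaining parameters zero, so that $\mu_{1}$ is the first distinguished generator.

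The decisive observation is that $A$ is not merely a subring but a ${\mathfrak g}(T_{p,q,r})$-submodule of the deformation ${\hat R}_{spec}$. By Proposition~\ref{prp-decrgen} each summand $W(\mu)=S_{\phi(\mu)}F_{0}\otimes S_{\theta(\mu)}F_{2}\otimes V_{\lambda(\sigma(\mu),\tau(\mu),a)}$ is a ${\mathfrak g}(T_{p,q,r})$-submodule of ${\hat R}_{spec}$, and the ${\mathfrak g}(T_{p,q,r})$-action on ${\hat R}_{spec}$ is by derivations of the ring structure --- this is how it arises in the construction of \cite{JWm18}. Hence a product of ${\mathfrak g}(T_{p,q,r})$-stable subspaces of ${\hat R}_{spec}$ is again ${\mathfrak g}(T_{p,q,r})$-stable, so $A$ is ${\mathfrak g}(T_{p,q,r})$-stable. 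Since each $V_{\lambda(\sigma(\mu_{i}),\tau(\mu_{i}),a)}$ is irreducible, a nonzero ${\mathfrak g}(T_{p,q,r})$-stable subspace of $W(\mu_{i})$ generates all of $W(\mu_{i})$ as a ${\mathfrak g}(T_{p,q,r})$-module. It is therefore enough to prove that $W(\mu_{i})(0)\subseteq A$, and analogously for the $\beta$- and $\gamma$-families.

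Now $W(\mu_{i})(0)$ is the summand of the multiplier ring $R_{a}={\hat R}_{gen}(0)$ indexed by $\mu_{i}$, and a direct computation of $\phi,\tau,\theta,\sigma$ gives $W(\mu_{i})(0)\is\bigwedge^{i}F_{2}^{*}\otimes\bigwedge^{i}F_{3}$ with $F_{0}$ and $F_{1}$ acting trivially --- a multiplicity-one irreducible summand of $R_{a}$ --- while $W(\mu_{1})(0)\is F_{2}^{*}\otimes F_{3}$. By the Cauchy formula $\bigwedge^{i}F_{2}^{*}\otimes\bigwedge^{i}F_{3}$ is the multiplicity-one ``$i\times i$ minors'' component of $S_{i}\bigl(W(\mu_{1})(0)\bigr)$, and the $i$-fold multiplication $S_{i}\bigl(W(\mu_{1})(0)\bigr)\to R_{a}$ does not annihilate that component --- a statement about $R_{a}$ alone, which one checks from its explicit description in \cite{JWm18}. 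Hence $W(\mu_{i})(0)\subseteq A$, and symmetrically for the $\beta$- and $\gamma$-families. Since Proposition~\ref{prp-aa} in grading zero gives that $R_{a}$ is generated as a ring by the degree-zero parts of all six families, these Cauchy reductions show that $R_{a}$ is generated by the degree-zero parts of the six distinguished representations, so $R_{a}\subseteq A$. Combined with the previous paragraph this yields $W(\mu_{i})\subseteq A$, and likewise for the other two families, so $A$ contains all six families of Proposition~\ref{prp-aa}; therefore $A={\hat R}_{spec}$. Finally, because ${\hat R}_{spec}$ is a flat graded, $\prod_{i=0}^{3}GL(F_{i})$-equivariant degeneration of ${\hat R}_{gen}$, a family of representations generating ${\hat R}_{spec}$ lifts to one generating ${\hat R}_{gen}$ by a graded Nakayama argument, completing the proof.

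I expect the points requiring genuine care to be the two inputs I have leaned on: that the ${\mathfrak g}(T_{p,q,r})$-action on ${\hat R}_{spec}$ really is by ring derivations, so that $A$ is ${\mathfrak g}(T_{p,q,r})$-stable, and the non-vanishing of the multiplication on the ``$i\times i$ minors'' component inside $R_{a}$; the final semicontinuity step is routine but should still be checked when $T_{p,q,r}$ is not Dynkin and the representations involved are infinite-dimensional. For Dynkin formats everything in sight is finite-dimensional and can be confirmed directly --- exactly the kind of computation behind the tables of Sections~\ref{section:dn}--\ref{section:e8} --- and carrying this out first would be a natural way to pressure-test the argument; if the uniform statement should fail, the likeliest culprit is an unexpected relation in $R_{a}$ that degenerates the minors, which is where I would look first.
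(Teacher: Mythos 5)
The statement you are proving is \textbf{stated as a conjecture} in the paper: there is no proof given, and the authors explicitly write only that they ``expect'' the smaller generating set to suffice. So there is nothing to compare your argument against; the question is whether your proposal actually closes the problem, and it does not. The decisive gap is the sentence asserting that ``the ${\mathfrak g}(T_{p,q,r})$-action on ${\hat R}_{spec}$ is by derivations of the ring structure --- this is how it arises in the construction of \cite{JWm18}.'' Neither the present paper nor the portion of \cite{JWm18} it quotes says this: Theorem 9.1 and Proposition \ref{prp-decrgen} only assert that ${\hat R}_{spec}$ \emph{carries a multiplicity-free action}, i.e.\ a module structure whose isotypic pieces are the $W(\mu)$. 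Your entire reduction --- from each $W(\mu_i)$ to its degree-zero piece $W(\mu_i)(0)\is\bigwedge^iF_2^*\otimes\bigwedge^iF_3$, and hence to the $i\times i$ minors of $d_3$ inside $R_a$ --- rests on the subalgebra $A$ being ${\mathfrak g}(T_{p,q,r})$-stable, which you get only from the Leibniz rule. Without it, $A$ contains the minors in degree zero but there is no mechanism forcing it to contain the higher graded pieces of $W(\mu_i)$, which is exactly where the new higher structure maps (the content of the conjecture) live. If the derivation property were available off the shelf, your argument would turn both Conjecture \ref{conj:mingen} and essentially Conjecture \ref{conj:mingen-1} into short propositions; the fact that the authors leave both open is strong evidence that this input is either unavailable or hides the real difficulty.

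The remaining steps are individually plausible but also not free. Your identification $W(\mu_i)(0)\is\bigwedge^iF_2^*\otimes\bigwedge^iF_3$ is a correct computation from the formulas for $\sigma,\tau,\theta,\phi$, and the Cauchy-formula/minors argument inside $R_a$ is believable since the entries of $d_3$ generate a polynomial subring of the multiplier ring; but the final transfer from ${\hat R}_{spec}$ back to ${\hat R}_{gen}$ needs an actual statement about the deformation (a filtration with $\operatorname{gr}{\hat R}_{gen}={\hat R}_{spec}$, or a $\GG_m$-equivariant flat family), and in the non-Dynkin cases the relevant modules are infinite-dimensional and ${\hat R}_{gen}$ may be non-Noetherian, so ``graded Nakayama'' must be justified weight-space by weight-space. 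In short: this is a sensible \emph{strategy}, and isolating the derivation property plus the non-vanishing of the minors as the two load-bearing claims is the right diagnosis, but as written it is a conditional argument for an open conjecture, not a proof.
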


For the formats with $r_1=1$, i.e. resolutions of cyclic modules, we actually have even stronger expectation.

\begin{conj}\label{conj:mingen-1} Let us assume that $r_1=1$. The ring ${\hat R}_{gen}$ is generated by the four representations $W(\mu)$ corresponding to the following weights.
\begin{enumerate}
\item $\alpha=(1), \beta=\gamma=a=b=c=0$, corresponding to $i=1$ in Proposition \ref{prp-aa} (1),
\item $a=1$, $\alpha=\beta=\gamma=b=c=0$,
\item $\beta=(1), \alpha=\gamma=a=b=c=0$, corresponding to $j=1$ in Proposition \ref{prp-aa} (3),
\item $b=1$, $\alpha=\beta=\gamma=a=c=0$,
\item $c=1$, $\alpha=\beta=\gamma=a=b=0$.
\end{enumerate}
Moreover, the first representation is redundant if $r_3=1$, and the second one is redundant if $r_3>1$. The last representation is just a variable $a_1$, so it is completely understood.
\end{conj}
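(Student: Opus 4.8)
The plan is to start from the explicit generating set of Proposition~\ref{prp-aa} and to reduce it step by step. Fix $r_1=1$. Then item~(5) of Proposition~\ref{prp-aa} is empty, and substituting into the formulas of Section~\ref{section:results} one finds, for the generator of item~(6), that $S_{\phi(\mu)}F_0=F_0^{*}$, $S_{\theta(\mu)}F_2=\bigwedge^{f_2}F_2^{*}$ and $\lambda(\sigma(\mu),\tau(\mu),a)=0$, so that this $W(\mu)$ is $F_0^{*}\otimes\bigwedge^{f_2}F_2^{*}$ with trivial ${\mathfrak g}(T_{p,q,r})$-action, i.e.\ the single variable $a_1$; we keep it, together with $W(b=1)$. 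The remaining generators split into a ``$z$-tower'' $W(\alpha=(1)),\dots,W(\alpha=(1^{r_3-1})),W(a=1)$ and a ``$y$-tower'' $W(\beta=(1)),\dots,W(\beta=(1^{r_2-1}))$, so it suffices to show that each tower is generated, as a subalgebra of ${\hat R}_{gen}$, by its first member. When $r_3=1$ item~(1) is empty: the $z$-tower reduces to $W(a=1)$, which is then its first member, and there is nothing to prove there; when $r_3>1$ the first member is $W(\alpha=(1))$ and the claim is in particular that $W(a=1)$ is redundant --- in either case this reproduces the two redundancy clauses of the statement.

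I treat the $z$-tower (the $y$-tower is identical after interchanging the second and third arms). Using the dictionary $\mu\mapsto\lambda(\sigma(\mu),\tau(\mu),a)$ of Section~\ref{section:results} together with $z_1=z_{r-r_3}$, a direct computation identifies
\[
W(\alpha=(1^{i}))\ \cong\ \textstyle\bigwedge^{i}F_2^{*}\otimes V_{\omega_{z_{r-i}}}\quad(1\le i\le r_3-1),\qquad W(a=1)\ \cong\ \textstyle\bigwedge^{r_3}F_2^{*}\otimes V_{\omega_{z_1}}
\]
as $\mathfrak{gl}(F_2)\times{\mathfrak g}(T_{p,q,r})$-modules (with trivial $\mathfrak{gl}(F_0)$-factor), the two lines being the single formula $\bigwedge^{i}F_2^{*}\otimes V_{\omega_{z_{r-i}}}$, $i=1,\dots,r_3$. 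The representation-theoretic input I would use is that $V_{\omega_{z_{r-i}}}$ occurs as a direct summand of $\bigwedge^{i}V_{\omega_{z_{r-1}}}$ over ${\mathfrak g}(T_{p,q,r})$: this is an equality in type $D$, it follows (with a complementary summand) for $E_6,E_7,E_8$ from the exterior-power decompositions underlying the tables of Sections~\ref{section:dn}--\ref{section:e8}, and for the remaining $T_{2,q,r}$ it is the expected persistence of the same tower. Combined with the Cauchy decomposition
\[
S^{i}\bigl(F_2^{*}\otimes V_{\omega_{z_{r-1}}}\bigr)=\bigoplus_{\nu\vdash i}S_{\nu}F_2^{*}\otimes S_{\nu}V_{\omega_{z_{r-1}}},
\]
whose column term $\nu=(1^{i})$ is $\bigwedge^{i}F_2^{*}\otimes\bigwedge^{i}V_{\omega_{z_{r-1}}}$, this places a copy of $W(\alpha=(1^{i}))$ --- and, for $i=r_3$, a copy of $W(a=1)$ --- inside $S^{i}\bigl(W(\alpha=(1))\bigr)$, hence inside the subalgebra generated by $W(\alpha=(1))$, provided the ring multiplication does not annihilate it.

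That proviso is where the real work lies, and I expect it to be the main obstacle. I would establish it on the deformation ${\hat R}_{spec}$ of Proposition~\ref{prp-decrgen}, on which ${\mathfrak g}(T_{p,q,r})\times\mathfrak{gl}(F_0)\times\mathfrak{gl}(F_2)$ acts by derivations. The explicit decomposition of ${\hat R}_{spec}$ shows that in the multidegree of $W(\alpha=(1^{i}))$ (resp.\ of $W(a=1)$) the summands that occur have pairwise distinct $\mathfrak{gl}(F_2)$-highest weights, so that component is multiplicity free; hence by $\mathfrak{gl}(F_2)\times{\mathfrak g}(T_{p,q,r})$-equivariance and Schur's lemma the multiplication map $S^{i}\bigl(W(\alpha=(1))\bigr)\to{\hat R}_{spec}$ is either zero or an isomorphism onto the corresponding isotypic piece, and non-vanishing is detected by a single highest-weight matrix coefficient, namely the non-vanishing in ${\hat R}_{spec}$ of a suitable antisymmetrized $i$-fold product of weight vectors of $W(\alpha=(1))$. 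This is the step that really requires the construction of \cite{JWm18}: in the model $(j_{gen})_{*}\OOO_{X_{gen}\setminus D_3}$ such a product of sections has the predicted antisymmetric leading term because the boundary divisors $D_j$ meet in normal crossings and the chosen sections do not vanish simultaneously off $D_3$. Finally, since ${\hat R}_{spec}$ is the special fibre of a flat degeneration of ${\hat R}_{gen}$ and the surjectivity in question is an open condition on fibres, it passes to ${\hat R}_{gen}$. The identical argument on the $y$-tower collapses it to $W(\beta=(1))$, completing the proof. The whole difficulty is concentrated in that one matrix-coefficient non-vanishing, which is exactly why the statement is offered as a conjecture.
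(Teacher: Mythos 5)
The statement you were asked to prove is stated in the paper as a \emph{conjecture} (Conjecture~\ref{conj:mingen-1}); the paper offers no proof, only the motivation that it sharpens the generating set of Proposition~\ref{prp-aa}. So there is no argument of the authors to compare yours against, and your submission should be judged as a proposed proof strategy. As such it is a sensible and largely correct \emph{reduction}, but not a proof. The parts that do work: with $r_1=1$ item (5) of Proposition~\ref{prp-aa} is indeed empty, the identification $W(c=1)\cong F_0^*\otimes\bigwedge^{f_2}F_2^*$ with trivial $\mathfrak g(T_{p,q,r})$-action is correct (note $f_0=r_1=1$, so this really is one variable), and the computation $W(\alpha=(1^i))\cong\bigwedge^iF_2^*\otimes V_{\omega_{z_{r-i}}}$, $W(a=1)\cong\bigwedge^{r_3}F_2^*\otimes V_{\omega_{z_1}}$ checks out against the formulas for $\sigma,\theta,\lambda$, as does the multiplicity-freeness you invoke (distinct $\mu$ give distinct pairs $(\phi(\mu),\theta(\mu))$). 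The Cartan-component claim $V_{\omega_{z_{r-i}}}\subset\bigwedge^iV_{\omega_{z_{r-1}}}$, which you only assert for non-Dynkin types, can in fact be proved uniformly (the extremal weight vector $v\wedge e_{z_{r-1}}v\wedge\cdots$ has weight exactly $\omega_{z_{r-i}}$, and integrable modules in category $\mathcal O$ are completely reducible), so that is not the real weakness.

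The genuine gap is the one you yourself flag: containment of $\bigwedge^iF_2^*\otimes V_{\omega_{z_{r-i}}}$ in $S^i(W(\alpha=(1)))$ as an abstract module says nothing about whether the multiplication map $S^i(W(\alpha=(1)))\to{\hat R}_{spec}$ hits that isotypic component. This is not a technicality one can wave at: for graded algebras of the form $\bigoplus_n V_{n\omega}$ (Pl\"ucker-type cones) the image of $S^i(\text{degree }1)$ is exactly the Cartan component and \emph{misses} all other summands, so non-vanishing of precisely these "column" components must be extracted from the actual construction of $(j_{gen})_*\OOO_{X_{gen}\setminus D_3}$ in \cite{JWm18}; your appeal to normal crossings of boundary divisors is an assertion, not an argument, and the paper under review contains no material from which it could be derived. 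A secondary soft spot is the transfer from ${\hat R}_{spec}$ to ${\hat R}_{gen}$: the semicontinuity argument (generation of the special fibre implies generation of nearby fibres) is the right idea but needs the relevant multigraded pieces to be finite-dimensional, which requires a word in the non-Dynkin cases where $\Res(V_\lambda)$ is infinite-dimensional. In summary, you have correctly reduced the conjecture to a concrete non-vanishing statement about products in ${\hat R}_{spec}$, but that statement is exactly where the content of the conjecture lies, and it remains unproved.
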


Particularly important are three critical representations. The first one is the one corresponding to $\alpha=(1), \beta=\gamma=a=b=c=0$ (or to $a=1$, $\alpha=\beta=\gamma=b=c=0$ if $r_3=1$). We denote it by $W(d_3)$. Similarly, the representation $W(d_2)$ is the one corresponding to $\beta=(1), \alpha=\gamma=a=b=c=0$.
Finally, $W(d_1)$ is the representation corresponding to $\gamma =(1)$, $\alpha=\beta=a=b=c=0$ (in the case $r_1=1$ we replace it by $W(a_2)$ i.e.
$b=1$, $\alpha=\beta=\gamma=a=c=0$).

All representations $W(\mu)$ have a grading
$$W(\mu)=\oplus_{i\ge 0}W_i(\mu).$$
So in order to understand the generators of ${\hat R}_{gen}$ in the case $r_1=1$ we need to understand the restrictions to $\mathfrak {gl}(F_3)\times \mathfrak {gl}(F_1)$ of three fundamental representations of ${\mathfrak g}(T_{p,q,r})$ corresponding to extremal nodes of the diagram $T_{p,q,r}$. This is carried out case by case in the following sections.

We end with some remarks about the tables. For each situation we list the representations in graded components of representations $V(\omega_{x_p})$, $V(\omega_{y_q})$, $V(\omega_{z_r})$. The bold-faced first column of a table shows degrees in the graded decomposition, and the middle and last column with numeric values have decomposition multiplicities.    Note that each table actually serves two formats, as $p$ and $q$ can be swapped. But this amounts to changing $F_1$ to $F_1^*$.
Some of the representations are very big, so in some cases we list just half of the representation. The other half then can be read off by duality.

%\small

\section{The type $D_n$}
\label{section:dn}

The first format is $(1,n,n,1)$. The graph $T_{p,q,r}$ is

\[ 
\raisebox{1em}{\xymatrixcolsep{1.2 pc}\xymatrixrowsep{1pc}\xymatrix{ 
{x_1} \ar@{-}[r]  &{u}
\ar@{-}[r] &
  {y_1} \ar@{-}[r] & {y_2}
\ar@{-}[r]  & {\cdots}
\ar@{-}[r] & {y_{n-3}} 
\\ & {z_1} \ar@{-}[u]  }}
\]
We number the vertices as follows:
\[
\raisebox{1em}{\xymatrixcolsep{2 pc}\xymatrixrowsep{1.5pc}\xymatrix{ 
*{\circ} \ar@{-}[r]^<{n}  &*{\circ}
\ar@{-}[r]^<{n-2}  &
  *{\circ} \ar@{-}[r]^<{n-3} & *{\ \cdots \ }
\ar@{-}[r]_<{}  & *{\circ}
\ar@{-}[]^<{1}
\\ & *{\bullet} \ar@{-}[u]^<{n-1}  }}
\]

% $$\begin{array}{llllllll} n&-&n-2&-&n-3&\ldots&-&1\\ 
% &&|&&&&&\\
% &&n-1&&&&
% \end{array}$$
The Lie algebra ${\mathfrak g}(T_{p,q,r})$ is ${\mathfrak g}(D_n) = \mathfrak {so}(2n)= \mathfrak {so}(F_1\oplus F^*_1)$ ($dim\ F_1=n$),  with the grading
$${\mathfrak g}(D_n)= {\mathfrak g}_{-1}\oplus {\mathfrak g}_0\oplus {\mathfrak g}_1$$
where ${\mathfrak g}_0= \mathfrak {sl}(F_3)\times \mathfrak {sl}(F_1)\times\CC =  \mathfrak {sl}(F_1)\times\CC$  and ${\mathfrak g}_1=F_3^*\otimes\bigwedge^2 F_1$.
The orthogonal space is $U:=F_1\oplus F_1^*$.

It is not difficult to see that

$$ V(\omega_1)= F_1^*\oplus F_3^*\otimes F_1,$$
$$V(\omega_{n-1} )=\oplus_{k=0}^{n\over 2} S_{1-k}F_3^*\otimes\bigwedge^{2k}F_1,$$
$$ V(\omega_{n})= \oplus_{k=0}^{n\over 2} S_{-k}F_3\otimes\bigwedge^{2k+1}F_1.$$

The next format is $(1,4,n,n-3)$. The graph $T_{p,q,r}$ with the distinguished root $z_1$ and the labeling of the vertices are as follows:
\[  
\raisebox{1em}{\xymatrixrowsep{1pc}\xymatrixcolsep{1pc}\xymatrix{ 
{x_1} \ar@{-}[r]  & {u} \ar@{-}[r]  
&{y_1} \ar@{-}[] \\ &  {z_1} \ar@{-}[u] \\ & {\vdots}\ar@{-}[u] \\  & {z_{n-4}} \ar@{-}[u]
\\ & {z_{n-3}} \ar@{-}[u]  }}
\hspace*{2.5cm}
\raisebox{1em}{\xymatrixrowsep{1.5 pc}\xymatrixcolsep{2 pc}\xymatrix{ 
*{\circ} \ar@{-}[r]^<{n-1}  &*{\circ}
\ar@{-}[r]^<{n-2}  &
  *{\circ} \ar@{-}[]^<{n} \\ & *{\bullet}
\ar@{-}[u]_<{n-3} \\ & {\vdots}
\ar@{-}[u]^<{} 
\\ & *{\circ} \ar@{-}[u]_<{2}\\ & *{\circ} \ar@{-}[u]_<{1} }}
\]

% $$\begin{array}{lllll} x_{1}&-&u&-&y_1\\
% &&|&&\\
% &&z_{1}&&\\
% &&|&&\\
% &&\vdots&&\\
% &&z_{r-2}&&\\
% &&|&&\\
% &&z_{r-1}&&
% \end{array}$$

The corresponding Lie algebra is 
$${\mathfrak g}(T_{p,q,r})=\mathfrak {so}(2n).$$ 
The orthogonal space in question is
$$U:= F_3\oplus \bigwedge^2 F_1\oplus F_3^*,$$
with $dim\ F_3=n-3$ and $dim\ F_1=4$.
We have
$$V(\omega_1)=F_3\oplus\bigwedge^2 F_1\oplus F_3^*\otimes\bigwedge^4 F_1,$$
$$V(\omega_{n-1})=\bigwedge^{even}(F_3)\otimes F_1\oplus \bigwedge^{odd}(F_3)\otimes F_1^*,$$
$$V(\omega_n)= \bigwedge^{even}(F_3^*)\otimes F_1\oplus \bigwedge^{odd}(F_3^*)\otimes F_1^*.$$

\bigskip\bigskip

Finally we have the format $(n-3,n,4,1)$. The graph $T_{p,q,r}$ is

\[ 
\raisebox{1em}{\xymatrixcolsep{1.2 pc}\xymatrixrowsep{1pc}\xymatrix{ 
{x_{n-3}} \ar@{-}[r]  & {x_{n-2}} \ar@{-}[r]  
&{\cdots} \ar@{-}[r] &  {x_1} \ar@{-}[r] & {u}\ar@{-}[r]   & {y_{1}} 
\\ &&&& {z_1} \ar@{-}[u]  }}
\]
We number the vertices as follows:
\[
\raisebox{1em}{\xymatrixcolsep{2 pc}\xymatrixrowsep{1.5pc}\xymatrix{ 
*{\circ} \ar@{-}[r]^<{1}  &*{\circ}
\ar@{-}[r]^<{2}  &
  *{\ \cdots \ } \ar@{-}[r]^<{} & *{\circ}
\ar@{-}[r]^<{n-3}  & *{\circ}
\ar@{-}[r]^<{n-2} & *{\circ}
\ar@{-}[]^<{n-1}
\\ &&&& *{\bullet} \ar@{-}[u]^<{n}  }}
\]
The orthogonal space is $U=F_1\oplus F_1^*.$ This case can be obtained from the first format $(1,n,n,1)$ with $F_1$ and $F_1^*$ swapped.

% $$\begin{array}{lllllllllll} x_1&-&x_2&-&...&-&x_{n-3}&-&u&-&y_1\\
% &&&&&&&&|&&\\
% &&&&&&&&z_1&&
% \end{array}$$

% We number the vertices 

% $$\begin{array}{lllllllllll} 1&-&2&-&...&-&n-3&-&n-2&-&n-1\\
% &&&&&&&&|&&\\
% &&&&&&&&n&&
% \end{array}$$

% \pagebreak

\section{The type $E_6$}
\label{section:e6}

\subsection{ $E_6$ graded by $\alpha_5$} This format is $(1,5,6,2)$.

\[
\raisebox{1em}{\xymatrix@R=3ex{ 
*{\circ}<3pt> \ar@{-}[r]_<{2}  &*{\circ}<3pt>
\ar@{-}[r]_<{4}  &
  *{\circ}<3pt> \ar@{-}[r]_<{3}   &*{\circ}<3pt> \ar@{}[]_<{1}
\\ & *{\bullet}<3pt> \ar@{-}[u]^<{5} \\ & *{\circ}<3pt> \ar@{-}[u]^<{6}  }}
\hspace{2cm}
\raisebox{1em}{\xymatrix@R=3ex{ 
*{x_1}<3pt> \ar@{-}[r]_<{}  &*{u}<3pt>
\ar@{-}[r]_<{}  &
  *{y_1}<3pt> \ar@{-}[r]_<{}   &*{y_2}<3pt> \ar@{}[]_<{}
\\ & *{z_1}<3pt> \ar@{-}[u]^<{} \\ & *{z_2}<3pt> \ar@{-}[u]^<{}  }}
\]

\subsubsection{ $V(\omega_1)$} 
This representation is of dimension 27 and has 4 graded components.

\begin{center}

\begin{tabular}{|c||l|ll|c|}
\hline
{\bf 0} & $F_1^*$ &(0,0)&(0,0,0,0,-1)&1 \\ \hline
{\bf 1} & $F_3^*\otimes F_1$ &(1,0)&(1,0,0,0,0) &1 \\ \hline
{\bf 2} & $\bigwedge^2 F_3^*\otimes\bigwedge^3F_1$ &(1,1)&(1,1,1,0,0) &1 \\ \hline
{\bf 3} & $S_{2,1}F_3^*\otimes\bigwedge^5 F_1$ &(2,1)&(1,1,1,1,1) &1  \\ \hline
\end{tabular}

\end{center}

\subsubsection{ $V(\omega_6)$}
This representation is of dimension 27 and has 4 graded components.

\begin{center}

\begin{tabular}{|c||l|ll|c|}
\hline
{\bf 0} & $F_3$ &(0,-1)&(0,0,0,0,0)&1 \\ \hline
{\bf 1} & $\bigwedge^2F_1$ &(0,0)&(1,1,0,0,0) &1 \\ \hline
{\bf 2} & $F_3^*\otimes\bigwedge^4F_1$ &(1,0)&(1,1,1,1,0) &1 \\ \hline
{\bf 3} & $\bigwedge^2F_3^*\otimes S_{2,1,1,1,1}F_1$ &(1,1)&(2,1,1,1,1) &1  \\ \hline
\end{tabular}

\end{center}

\subsubsection{ $V(\omega_2)$}
This representation is of dimension 78 and has 5 graded components.
\begin{center}

\begin{tabular}{|c||l|ll|c||l|ll|c|}
\hline
{\bf 0} & $F_1$ &(0,0)&(1,0,0,0,0)&1 &&&& \\ \hline
{\bf 1} & $F_3^*\otimes \bigwedge^3 F_1$ &(1,0)&(1,1,1,0,0) &1 &&&& \\ \hline
{\bf 2} & $S_2F_3^*\otimes\bigwedge^5 F_1$ &(2,0)&(1,1,1,1,1) &1  & $\bigwedge^2 F_3^*\otimes \bigwedge^5 F_1$ &(1,1)&(1,1,1,1,1) &1  \\ \hline
 & $\bigwedge^2 F_3^*\otimes S_{2,1^3}F_1$ &(1,1)&(2,1,1,1,0) &1 &&&& \\ \hline
{\bf 3} & $S_{2,1}F_3^*\otimes S_{2^2,1^3}F_1$ &(2,1)&(2,2,1,1,1) &1  &&&& \\ \hline
{\bf 4} & $S_{2,2}F_3^*\otimes S_{2^4,1}F_1$ &(2,2)&(2,2,2,2,1) &1 &&&&  \\ \hline
\end{tabular}
\end{center}

\medskip 

%\pagebreak

\subsection{ $E_6$ graded by $\alpha_2$} This format is $(2,6,5,1)$.

\[
\raisebox{1em}{\xymatrix@R=3ex{ *{\circ}<3pt> \ar@{-}[r]_<{1}  &
*{\circ}<3pt> \ar@{-}[r]_<{3}  &*{\circ}<3pt>
\ar@{-}[r]_<{4}  &
  *{\circ}<3pt> \ar@{-}[r]_<{5}   &*{\circ}<3pt> \ar@{}[]_<{6}
\\ & & *{\bullet}<3pt> \ar@{-}[u]^<{2}  }}
\hspace{2cm}
\raisebox{1em}{\xymatrix@R=3ex{ *{x_2}<3pt> \ar@{-}[r]_<{}  &
*{x_1}<3pt> \ar@{-}[r]_<{}  &*{u}<3pt>
\ar@{-}[r]_<{}  &
  *{y_1}<3pt> \ar@{-}[r]_<{}   &*{y_2}<3pt> \ar@{}[]_<{}
\\ & & *{z_1}<3pt> \ar@{-}[u]^<{}  }}
\]

\subsubsection{ $V(\omega_6)$}

There are  3 graded components.
\begin{center}

\begin{tabular}{|c||l|ll|c|}
\hline
{\bf 0} & $F_1$ &&(1,0,0,0,0,0)&1 \\ \hline
{\bf 1} & $\bigwedge^4F_1$ &&(1,1,1,1,0,0) &1 \\ \hline
{\bf 2} & $S_{2,1^5}F_1$ &&(2,1,1,1,1,1) &1 \\ \hline
\end{tabular}

\end{center}

\subsubsection{ $V(\omega_1)$}

There are 3 graded components.
\begin{center}

\begin{tabular}{|c||l|ll|c|}
\hline
{\bf 0} & $F_1^*$ &&(0,0,0,0,0,-1)&1 \\ \hline
{\bf 1} & $\bigwedge^2F_1$ &&(1,1,0,0,0,0) &1 \\ \hline
{\bf 2} & $\bigwedge^5F_1$ &&(1,1,1,1,1,0) &1 \\ \hline
\end{tabular}

\end{center}

\subsubsection{ $V(\omega_2)$}

There are 5 graded components.

\begin{center}

\begin{tabular}{|c||l|ll|c||l|ll|c|}
\hline
{\bf 0} & $\mathbb C$ &&(0,0,0,0,0,0)&1 &&&& \\ \hline
{\bf 1}&$\bigwedge^3 F_1$&&(1,1,1,0,0,0)&1 & & && \\ \hline
{\bf 2} &$S_{2,1^4}F_1$&&(2,1,1,1,1,0)&1&$\bigwedge^6 F_1$& & (1,1,1,1,1,1)&1 \\ \hline
{\bf 3} &$S_{2^3,1^3}F_1 $&&(2,2,2,1,1,1)&1 &&&& \\ \hline
{\bf 4} &$S_{2^6}F_1 $&&(2,2,2,2,2,2)&1&&&& \\ \hline
\end{tabular}

\end{center}

% \pagebreak

\section{The type $E_7$}
\label{section:e7}

\subsection{ $E_7$ graded by $\alpha_5$} \label{E7A5} This format is $(1,5,7,3)$.

\[
\raisebox{1em}{\xymatrix@R=3ex{ 
*{\circ}<3pt> \ar@{-}[r]_<{2}  &*{\circ}<3pt>
\ar@{-}[r]_<{4}  &
  *{\circ}<3pt> \ar@{-}[r]_<{3}   &*{\circ}<3pt> \ar@{}[]_<{1}
\\ & *{\bullet}<3pt> \ar@{-}[u]^<{5} \\ & *{\circ}<3pt> \ar@{-}[u]^<{6} \\ & *{\circ}<3pt> \ar@{-}[u]^<{7} }}
\hspace{2cm}
\raisebox{1em}{\xymatrix@R=3ex{ 
*{x_1}<3pt> \ar@{-}[r]_<{}  &*{u}<3pt>
\ar@{-}[r]_<{}  &
  *{y_1}<3pt> \ar@{-}[r]_<{}   &*{y_2}<3pt> \ar@{}[]_<{}
\\ & *{z_1}<3pt> \ar@{-}[u]^<{} \\ & *{z_2}<3pt> \ar@{-}[u]^<{} \\ & *{z_3}<3pt> \ar@{-}[u]^<{} }}
\]

\subsubsection{ $V(\omega_7)$} \label{E7V7}
This representation is of dimension 56 and has 6 graded components.
\begin{center}

\begin{tabular}{|c||l|ll|c|}
\hline
{\bf 0} & $F_3$ &(0,0,-1)&(0,0,0,0,0)&1 \\ \hline
{\bf 1} & $\bigwedge^2F_1$ &(0,0,0)&(1,1,0,0,0) &1 \\ \hline
{\bf 2} & $F_3^*\otimes\bigwedge^4F_1$ &(1,0,0)&(1,1,1,1,0) &1 \\ \hline
{\bf 3} & $\bigwedge^2F_3^*\otimes S_{2,1^4} F_1$ &(1,1,0)&(2,1,1,1,1) &1  \\ \hline
{\bf 4} & $\bigwedge^3 F_3^*\otimes S_{2^3,1^2}F_1$ &(1,1,1)&(2,2,2,1,1) &1 \\ \hline
{\bf 5} & $S_{2,1,1}F_3^*\otimes S_{2^5}F_1$ &(2,1,1)&(2,2,2,2,2) &1 \\ \hline
\end{tabular}

\end{center}

\subsubsection{ $V(\omega_1)$}

This representation is of dimension 133 and has 7 graded components.
\begin{center}

\begin{tabular}{|c||l|ll|c||l|ll|c|}
\hline
{\bf 0} & $F_1^*$ &(0,0,0)&(0,0,0,0,-1)&1 &&&&  \\ \hline
{\bf 1} & $F_3^*\otimes F_1$ &(1,0,0)&(1,0,0,0,0) &1 &&&& \\ \hline
{\bf 2} & $\bigwedge^2 F_3^*\otimes\bigwedge^3F_1$ &(1,1,0)&(1,1,1,0,0) &1 &&&& \\ \hline
{\bf 3} & $\bigwedge^3 F_3^*\otimes  S_{2,1^3}F_1 $ &(1,1,1)&(2,1,1,1,0) &1  & $\bigwedge^3 F_3^*\otimes \bigwedge^5 F_1$ &(1,1,1)&(1,1,1,1,1) &1  \\ \hline & $S_{2,1}F_3^*\otimes\bigwedge^5 F_1$ &(2,1,0)&(1,1,1,1,1) &1  &&&& \\ \hline
{\bf 4} & $S_{2,1,1}F_3^*\otimes S_{2^2,1^3}F_1$ &(2,1,1)&(2,2,1,1,1) &1 &&&& \\ \hline
{\bf 5} & $S_{2,2,1}F_3^*\otimes S_{2^4,1}F_1$ &(2,2,1)&(2,2,2,2,1) &1&&&&  \\ \hline
{\bf 6} & $S_{2,2,2}F_3^*\otimes S_{3,2^4}F_1$ &(2,2,2)&(3,2,2,2,2) &1 &&&& \\ \hline
\end{tabular}

\end{center}

\pagebreak

\subsubsection{ $V(\omega_2)$}
This representation is of dimension 912 and has 10 graded components.
\begin{center}

\begin{tabular}{|c||l|ll|c||l|ll|c|}
\hline
{\bf 0} & $F_1$ &(0,0,0)&(1,0,0,0,0)&1 &&&& \\ \hline
{\bf 1} & $F_3^*\otimes \bigwedge^3 F_1$ &(1,0,0)&(1,1,1,0,0) &1 &&&& \\ \hline
{\bf 2} & $S_2F_3^*\otimes\bigwedge^5 F_1$ &(2,0,0)&(1,1,1,1,1) &1  & $\bigwedge^2 F_3^*\otimes \bigwedge^5 F_1$ &(1,1,0)&(1,1,1,1,1) &1  \\ \hline
 & $\bigwedge^2 F_3^*\otimes S_{2,1^3}F_1$ &(1,1,0)&(2,1,1,1,0) &1 &&&& \\ \hline
{\bf 3} & $S_{2,1}F_3^*\otimes S_{2^2,1^3}F_1$ &(2,1,0)&(2,2,1,1,1) &1   & $\bigwedge^3 F_3^*\otimes S_{3,1^4}F_1$ &(1,1,1)&(3,1,1,1,1) &1   \\ \hline
 & $\bigwedge^3 F_3^*\otimes S_{2^2,1^3}F_1$ &(1,1,1)&(2,2,1,1,1) &1   & $\bigwedge^3 F_3^*\otimes S_{2^3,1}F_1$ &(1,1,1)&(2,2,2,1,0) &1  \\ \hline

{\bf 4} & $S_{2,2}F_3^*\otimes S_{2^4,1}F_1$ &(2,2,0)&(2,2,2,2,1) &1 & $S_{2,1,1}F_3^*\otimes S_{2^4,1}F_1$ &(2,1,1)&(2,2,2,2,1) &2  \\ \hline
 & $S_{2,1,1}F_3^*\otimes S_{3,2,2,1,1}F_1$ &(2,1,1)&(3,2,2,1,1) &1 &&&& \\ \hline
{\bf 5} & $S_{2,2,1}F_3^*\otimes S_{3,2^4}F_1$ &(2,2,1)&(3,2,2,2,2) &2  & $S_{2,2,1}F_3^*\otimes S_{3,3,2,2,1}F_1$ &(2,2,1)&(3,3,2,2,1) &1 \\ \hline
 & $S_{3,1,1}F_3^*\otimes S_{3,2^4}F_1$ &(3,1,1)&(3,2,2,2,2) &1 &&&& \\ \hline
{\bf 6} & $S_{2,2,2}F_3^*\otimes S_{4,3,2^3}F_1$ &(2,2,2)&(4,3,2,2,2) &1  & $S_{2,2,2}F_3^*\otimes S_{3^4,1}F_1$ &(2,2,2)&(3,3,3,3,1) &1  \\ \hline
 & $S_{2,2,2}F_3^*\otimes S_{3^3,2^2}F_1$ &(2,2,2)&(3,3,3,2,2) &1 & $S_{3,2,1}F_3^*\otimes S_{3^3,2^2}F_1$ &(3,2,1)&(3,3,3,2,2) &1  \\ \hline
{\bf 7} & $S_{3,2,2}F_3^*\otimes S_{4,3^3,2}F_1$ &(3,2,2)&(4,3,3,3,2) &1  & $S_{3,2,2}F_3^*\otimes S_{3^5}F_1$ &(3,2,2)&(3,3,3,3,3) &1 \\ \hline
 & $S_{3,3,1}F_3^*\otimes S_{3^5}F_1$ &(3,3,1)&(3,3,3,3,3) &1 &&&& \\ \hline
{\bf 8} & $S_{3,3,2}F_3^*\otimes S_{4^2,3^3}F_1$ &(3,3,2)&(4,4,3,3,3) &1 &&&& \\ \hline
{\bf 9} & $S_{3,3,3}F_3^*\otimes S_{4^4,3}F_1$ &(3,3,3)&(4,4,4,4,3) &1 &&&& \\ \hline

\end{tabular}
\end{center}

%\pagebreak

\subsection{ $E_7$ graded by $\alpha_3$} This format is $(1,6,7,2)$.

\[
\raisebox{1em}{\xymatrix@R=3ex{ 
*{\circ}<3pt> \ar@{-}[r]_<{2}  &*{\circ}<3pt>
\ar@{-}[r]_<{4}  &
  *{\circ}<3pt> \ar@{-}[r]_<{5} & *{\circ}<3pt>
\ar@{-}[r]_<{6}  &*{\circ}<3pt> \ar@{}[]_<{7}
\\ & *{\bullet}<3pt> \ar@{-}[u]^<{3} \\ & *{\circ}<3pt> \ar@{-}[u]^<{1} }}
\hspace{2cm}
\raisebox{1em}{\xymatrix@R=3ex{ 
*{x_1}<3pt> \ar@{-}[r]_<{}  &*{u}<3pt>
\ar@{-}[r]_<{}  &
  *{y_1}<3pt> \ar@{-}[r]_<{} & *{y_2}<3pt>
\ar@{-}[r]_<{}  &*{y_3}<3pt> \ar@{}[]_<{}
\\ & *{z_1}<3pt> \ar@{-}[u]^<{} \\ & *{z_2}<3pt> \ar@{-}[u]^<{} }}
\]

\subsubsection{ $V(\omega_7)$}
There are  5 graded components.
\begin{center}

\begin{tabular}{|c||l|ll|c|}
\hline
{\bf 0} & $F_1^*$ &(0,0)&(0,0,0,0,0,-1)&1 \\ \hline
{\bf 1} & $F_3^*\otimes F_1$ &(1,0)&(1,0,0,0,0,0) &1 \\ \hline
{\bf 2} & $\bigwedge^2 F_3^*\otimes\bigwedge^3F_1$ &(1,1)&(1,1,1,0,0,0) &1 \\ \hline
{\bf 3} & $S_{2,1}F_3^*\otimes \bigwedge^5F_1$ &(2,1)&(1,1,1,1,1,0) &1  \\ \hline
{\bf 4} & $S_{2,2}F_3^*\otimes S_{2,1^5}F_1$ &(2,2)&(2,1,1,1,1,1) &1 \\ \hline
\end{tabular}

\end{center}

\subsubsection{ $V(\omega_1)$}
There are  7 graded components.
\begin{center}

\begin{tabular}{|c||l|ll|c||l|ll|c|}
\hline
{\bf 0} & $F_3$ &(0,-1)&(0,0,0,0,0,0)&1 &&&& \\ \hline
{\bf 1} & $\bigwedge^2 F_1$&(0,0)&(1,1,0,0,0,0)&1 & & &&\\ \hline
{\bf 2} & $F_3^*\otimes \bigwedge^4 F_1$&(1,0)&(1,1,1,1,0,0)&1& &  & &\\ \hline
{\bf 3} & $\bigwedge^2 F_3^*\otimes S_{2,1^4} F_1$&(1,1)&(2,1,1,1,1,0)&1 &$\bigwedge^2 F_3^*\otimes \bigwedge^6 F_1$ & (1,1) & (1,1,1,1,1,1) & 1  \\ \hline
&$S_2 F_3^*\otimes\bigwedge^6F_1$ &(2,0) & (1,1,1,1,1,1) & 1 &&&&\\ \hline
{\bf 4} &$S_{2,1}F_3^*\otimes S_{2,2,1^4}F_1$ &(2,1)&(2,2,1,1,1,1)&1& &&& \\ \hline
{\bf 5} &$S_{2,2}F_3^*\otimes S_{2^4,1,1}F_1$&(2,2)&(2,2,2,2,1,1)&1& & &&  \\ \hline
{\bf 6} &$S_{3,2}F_3^*\otimes S_{2^6}F_1$&(3,2)&(2,2,2,2,2,2)&1& & & &\\ \hline
\end{tabular}

\end{center}

\pagebreak

\subsubsection{ $V(\omega_2)$}

We have  9 graded components.

\begin{center}

\begin{tabular}{|c||l|ll|c||l|ll|c|}
\hline
{\bf 0} & $F_1$ &(0,0)&(1,0,0,0,0,0)&1 &&&& \\ \hline
{\bf 1}&$F_3^*\otimes\bigwedge^3 F_1$&(1,0)&(1,1,1,0,0,0)&1 & & && \\ \hline
{\bf 2} &$\bigwedge^2 F_3^*\otimes S_{2,1,1,1}F_1$&(1,1)&(2,1,1,1,0,0)&1&$\bigwedge^2 F_3^*\otimes\bigwedge^5 F_1$&(1,1) & (1,1,1,1,1,0)&1 \\ \hline
&$S_2F_3^*\otimes\bigwedge^5F_1$&(2,0) & (1,1,1,1,1,0)&1& & & &\\ \hline
{\bf 3} &$S_{2,1}F_3^*\otimes S_{2,2,1,1,1}F_1 $&(2,1)&(2,2,1,1,1,0)&1 &$S_{2,1}F_3^*\otimes S_{2,1^5}F_1$& (2,1) & (2,1,1,1,1,1) & 2 \\ \hline
{\bf 4} &$S_{2,2}F_3^*\otimes S_{2^3,1^3}F_1 $&(2,2)&(2,2,2,1,1,1)&1& $S_{2,2}F_3^*\otimes S_{3,2,1^4}F_1 $&(2,2)& (3,2,1,1,1,1)& 1 \\ \hline
&$S_{2,2}F_3^*\otimes S_{2,2,2,2,1}F_1$&(2,2) & (2,2,2,2,1,0)& 1 &$S_{3,1}F^*_3\otimes S_{2^3,1^3}F_1$ & (3,1)& (2,2,2,1,1,1)& 1 \\ \hline
{\bf 5} &$S_{3,2}F_3^*\otimes S_{3,2^3,1^2}F_1$&(3,2)&(3,2,2,2,1,1)&1& $S_{3,2}F_3^*\otimes S_{2^5,1}F_1$& (3,2)& (2,2,2,2,2,1)& 2  \\ \hline
{\bf 6} &$S_{3,3}F_3^*\otimes S_{3^2,2^3,1} F_1$&(3,3)&(3,3,2,2,2,1)&1& $S_{3,3}F_3^*\otimes S_{3,2^5}F_1$&(3,3)& (3,2,2,2,2,2)& 1 \\ \hline
&$S_{4,2}F_3^*\otimes S_{3,2^5}F_1$&(4,2) & (3,2,2,2,2,2)& 1& & & &\\ \hline
{\bf 7}&$S_{4,3} F_3^*\otimes S_{3^3,2^3} F_1$&(4,3)&(3,3,3,2,2,2)&1 & & && \\ \hline
{\bf 8}&$S_{4,4}F_3^*\otimes S_{3^5,2}F_1$&(4,4)&(3,3,3,3,3,2)&1 & & && \\ \hline
\end{tabular}

\end{center}

%\pagebreak

\subsection{ $E_7$ graded by $\alpha_2$} This format is $(2,7,6,1)$.

\[
\raisebox{1em}{\xymatrix@R=3ex{ *{\circ}<3pt> \ar@{-}[r]_<{1}  &
*{\circ}<3pt> \ar@{-}[r]_<{3}  &*{\circ}<3pt>
\ar@{-}[r]_<{4}  &
  *{\circ}<3pt> \ar@{-}[r]_<{5} & *{\circ}<3pt>
\ar@{-}[r]_<{6}  &*{\circ}<3pt> \ar@{}[]_<{7}
\\ & & *{\bullet}<3pt> \ar@{-}[u]^<{2}  }}
\hspace{2cm}
\raisebox{1em}{\xymatrix@R=3ex{ *{x_2}<3pt> \ar@{-}[r]_<{}  &
*{x_1}<3pt> \ar@{-}[r]_<{}  &*{u}<3pt>
\ar@{-}[r]_<{}  &
  *{y_1}<3pt> \ar@{-}[r]_<{} & *{y_2}<3pt>
\ar@{-}[r]_<{}  &*{y_3}<3pt> \ar@{}[]_<{}
\\ & & *{z_1}<3pt> \ar@{-}[u]^<{}  }}
\]

\subsubsection{ $V(\omega_7)$}

There are 4 graded components.
\begin{center}

\begin{tabular}{|c||l|ll|c|}
\hline
{\bf 0} & $F_1^*$ &&(0,0,0,0,0,0,-1)&1 \\ \hline
{\bf 1} & $\bigwedge^2F_1$ &&(1,1,0,0,0,0,0) &1 \\ \hline
{\bf 2} & $\bigwedge^5F_1$ &&(1,1,1,1,1,0,0) &1 \\ \hline
{\bf 3} & $S_{2,1^6}F_1$ &&(2,1,1,1,1,1,1) &1 \\ \hline
\end{tabular}

\end{center}

\subsubsection{ $V(\omega_1)$}

There are  5 graded components.
\begin{center}

\begin{tabular}{|c||l|ll|c||l|ll|c|}
\hline
{\bf 0} & $F_1$ &&(1,0,0,0,0,0,0)&1&&&& \\ \hline
{\bf 1} & $\bigwedge^4F_1$ &&(1,1,1,1,0,0,0) &1 &&&&\\ \hline
{\bf 2} & $S_{2,1^5}F_1$ &&(2,1,1,1,1,1,0)
 &1 & $\bigwedge^7F_1$ &&(1,1,1,1,1,1,1)
 &1 \\ \hline
{\bf 3} & $S_{2^3,1^4}F_1$ &&(2,2,2,1,1,1,1)
 &1&&&& \\ \hline
{\bf 4} & $S_{2^6,1}F_1$ &&(2,2,2,2,2,2,1)
 &1 &&&&\\ \hline
\end{tabular}

\end{center}

\subsubsection{ $V(\omega_2)$}
There are  8 graded components.

\begin{center}

\begin{tabular}{|c||l|ll|c||l|ll|c|}
\hline
{\bf 0} & $\mathbb C$  &&  (0,0,0,0,0,0,0) & 1 &&&& \\ \hline
{\bf 1} & $\bigwedge^3, F_1$  &&  (1,1,1,0,0,0,0) & 1 & & && \\ \hline
{\bf 2} & $S_{2,1^4}F_1$  &&  (2,1,1,1,1,0,0) & 1
& $\bigwedge^6 F_1$ &&  (1,1,1,1,1,1,0) & 1 \\ \hline
{\bf 3} & $S_{3,1^6}F_1$  &&  (3,1,1,1,1,1,1) & 1
& $S_{2^3,1,^3,}F_1$  &&  (2,2,2,1,1,1,0) & 1 \\ \hline
& $S_{2^2,1^5}F_1$ &&  (2,2,1,1,1,1,1) & 1 &&&& \\ \hline
{\bf 4}  & $S_{3,2^3,1^3}F_1$ &&  (3,2,2,2,1,1,1) & 1
& $S_{2^6}F_1$ &&  (2,2,2,2,2,2,0) & 1 \\ \hline
& $S_{2^5,1^2}F_1$ &&  (2,2,2,2,2,1,1) & 1 &&&& \\ \hline
{\bf 5} & $S_{3^2,2^4,1}F_1$  &&  (3,3,2,2,2,2,1) & 1
& $S_{3,2^6}F_1$ &&  (3,2,2,2,2,2,2) & 1 \\ \hline
{\bf 6} & $S_{3^4,2^3,}F_1$ &&  (3,3,3,3,2,2,2) & 1 &&&& \\ \hline
{\bf 7} & $S_{3^7}F_1$ &&  (3,3,3,3,3,3,3) & 1 &&&& \\ \hline
\end{tabular}

\end{center}

%\vfill

%\pagebreak

\section{The type $E_8$}
\label{section:e8}

{
%\small
%\setlength{\textwidth}{17cm} 
%\setlength{\textheight}{25cm} 
%\setlength{\hoffset}{0 cm}
%\setlength{\voffset}{-2 cm}

\subsection{ $E_8$ graded by $\alpha_5$} This format is $(1,5,8,4)$.

\[
\raisebox{1em}{\xymatrix@R=3ex{ 
*{\circ}<3pt> \ar@{-}[r]_<{2}  &*{\circ}<3pt>
\ar@{-}[r]_<{4}  &
  *{\circ}<3pt> \ar@{-}[r]_<{3}   &*{\circ}<3pt> \ar@{}[]_<{1}
\\ & *{\bullet}<3pt> \ar@{-}[u]^<{5} \\ & *{\circ}<3pt> \ar@{-}[u]^<{6} \\ & *{\circ}<3pt> \ar@{-}[u]^<{7} \\ & *{\circ}<3pt> \ar@{-}[u]^<{8}}}
\hspace{2cm}
\raisebox{1em}{\xymatrix@R=3ex{ 
*{x_1}<3pt> \ar@{-}[r]_<{}  &*{u}<3pt>
\ar@{-}[r]_<{}  &
  *{y_1}<3pt> \ar@{-}[r]_<{}   &*{y_2}<3pt> \ar@{}[]_<{}
\\ & *{z_1}<3pt> \ar@{-}[u]^<{} \\ & *{z_2}<3pt> \ar@{-}[u]^<{} \\ & *{z_3}<3pt> \ar@{-}[u]^<{} \\ & *{z_4}<3pt> \ar@{-}[u]^<{}}}
\]

\subsubsection{ $V(\omega_8)$} 
This representation is of dimension 248 and has 11 graded components.

\begin{center}
\begin{tabular}{|c||l|ll|c||l|ll|c|}
\hline
{\bf 0} & $F_3$ &(0,0,0,-1)&(0,0,0,0,0)&1 &&&&  \\ \hline
{\bf 1} & $\bigwedge^2F_1$ &(0,0,0,0)&(1,1,0,0,0) &1 &&&& \\ \hline
{\bf 2} & $F_3^*\otimes\bigwedge^4F_1$ &(1,0,0,0)&(1,1,1,1,0) &1 &&&& \\ \hline
{\bf 3} & $\bigwedge^2F_3^*\otimes S_{2,1^4} F_1$ &(1,1,0,0)&(2,1,1,1,1) &1 &&&& \\ \hline
{\bf 4} & $\bigwedge^3 F_3^*\otimes S_{2^3,1^2}F_1$ &(1,1,1,0)&(2,2,2,1,1) &1 &&&& \\ \hline
{\bf 5} & $\bigwedge^4 F_3^*\otimes S_{3,2^3,1}F_1$ & (1,1,1,1)&(3,2,2,2,1)&1 & $\bigwedge^4 F_3^*\otimes S_{2^5}F_1$ &(1,1,1,1)&(2,2,2,2,2) &1  \\ \hline & $S_{2,1,1}F_3^*\otimes S_{2^5}F_1$ &(2,1,1,0)&(2,2,2,2,2) &1  &&&&  \\ \hline
{\bf 6} & $S_{2,1^3}F_3^*\otimes S_{3^2,2^3}F_1$ &(2,1,1,1)&(3,3,2,2,2) &1 &&&& \\ \hline
{\bf 7} & $S_{2^2,1^2}F^*_3\otimes S_{3^4,2}F_1$ &(2,2,1,1)&(3,3,3,3,2) &1 &&&& \\ \hline
{\bf 8} & $S_{2^3,1}F_3^*\otimes S_{4,3^4}F_1$ &(2,2,2,1)&(4,3,3,3,3) &1 &&&& \\ \hline
{\bf 9} & $S_{2^4}F^*_3\otimes S_{4^3,3^2}F_1$ &(2,2,2,2)&(4,4,4,3,3) &1 &&&& \\ \hline
{\bf 10} & $S_{3,2^3}F^*_3\otimes S_{4^5}F_1$ &(3,2,2,2)&(4,4,4,4,4) &1 &&&& \\ \hline
\end{tabular}

\end{center}
%
%\vfill
%\pagebreak

\subsubsection{ $V(\omega_1)$}

This representation is of dimension 3875 and has 17 graded components.

\begin{center}
\begin{tabular}{|c||l|ll|c||l|ll|c|}
\hline
{\bf 0} & $F_1^*$ &(0,0,0,0)  & (0,0,0,0,-1)&1 &&&& \\ \hline
{\bf 1} & $F_3^*\otimes F_1$ &(1,0,0,0) & (1,0,0,0,0) &1 &&&& \\ \hline
{\bf 2} & $\bigwedge^2 F_3^*\otimes\bigwedge^3F_1$ &(1,1,0,0) & (1,1,1,0,0) &1  &&&& \\ \hline

{\bf 3} & $\bigwedge^3 F_3^*\otimes S_{2,1^3}F_1$ &(1,1,1,0) & (2,1,1,1,0) &1   & $\bigwedge^3 F_3^*\otimes \bigwedge^5 F_1$ &(1,1,1,0) & (1,1,1,1,1) &1   \\ \hline
 & $S_{2,1}F_3^*\otimes\bigwedge^5 F_1$ &(2,1,0,0) & (1,1,1,1,1) &1 &&&& \\ \hline

{\bf 4} & $\bigwedge^4 F_3^*\otimes S_{2^2,1^3}F_1$ &(1,1,1,1) & (2,2,1,1,1) &1 & $\bigwedge^4 F_3^*\otimes S_{2^3,1}F_1$ &(1,1,1,1) & (2,2,2,1,0) &1  \\ \hline
 & $\bigwedge^4 F_3^*\otimes S_{3,1^4}F_1$ &(1,1,1,1)  & (3,1,1,1,1) &1 &$S_{2,1,1}F_3^*\otimes S_{2^2,1^3}F_1$ &(2,1,1,0)  & (2,2,1,1,1) &1  \\ \hline

{\bf 5} & $S_{2,1,1,1}F_3^*\otimes S_{2^4,1}F_1$ &(2,1,1,1)  & (2,2,2,2,1) &2  & $S_{2,1,1,1}F_3^*\otimes S_{3,2^2,1^2}F_1$ & (2,1,1,1)  & (3,2,2,1,1) &1 \\ \hline
 & $S_{2,2,1}F_3^*\otimes S_{2^4,1}F_1$ &(2,2,1,0)  & (2,2,2,2,1) &1 &&&& \\ \hline

{\bf 6} & $S_{2^2,1^2}F_3^*\otimes S_{3,2^4}F_1$ &(2,2,1,1) & (3,2,2,2,2) &2  & $S_{2^2,1^2}F_3^*\otimes S_{3^2,2^2,1}F_1$ &(2,2,1,1) & (3,3,2,2,1) &1  \\ \hline
 & $S_{3,1^3}F_3^*\otimes S_{3,2^4}F_1$ &(3,1,1,1)  & (3,2,2,2,2) &1 & $S_{2,2,2}F_3^*\otimes S_{3,2^4}F_1$ &(2,2,2,0)  & (3,2,2,2,2) &1  \\ \hline

{\bf 7} & $S_{2^3,1}F^*_3\otimes S_{3^3,2^2}F_1$ &(2,2,2,1) & (3,3,3,2,2) &2  & $S_{2^3,1}F^*_3\otimes S_{4,3,2^3}F_1$ &(2,2,2,1) & (4,3,2,2,2) &1 \\ \hline
 & $S_{2^3,1}F^*_3\otimes S_{3^4,1}F_1$ &(2,2,2,1)  & (3,3,3,3,1) &1 & $S_{3,2,1,1}F^*_3\otimes S_{3^3,2^2}F_1$ &(3,2,1,1)  & (3,3,3,2,2) &1 \\ \hline

{\bf 8} & $S_{2^4}F^*_3\otimes S_{4,3^3,2}F_1$ &(2,2,2,2) & (4,3,3,3,2) &2  & $S_{2^4}F^*_3\otimes S_{3^5}F_1$ &(2,2,2,2) & (3,3,3,3,3) &2 \\ \hline
 & $S_{2^4}F^*_3\otimes S_{4^2,3,2^2}F_1$ &(2,2,2,2)  & (4,4,3,2,2) &1 & $S_{3,2,2,1}F^*_3\otimes S_{3^5}F_1$ &(3,2,2,1)  & (3,3,3,3,3) &2 \\ \hline
& $S_{3,2,2,1}F^*_3 \otimes S_{4,3^3,2}F_1$ &(3,2,2,1) & (4,3,3,3,2) &1 & $S_{3^2,1^2}F^*_3\otimes S_{3^5}F_1$ &(3,3,1,1)  & (3,3,3,3,3) &1 \\ \hline

\end{tabular}
\end{center}
\hfill (The table continues on the next page.)

\begin{center}
\begin{tabular}{|c||l|ll|c||l|ll|c|}
\hline

{\bf 9} & $S_{3,2^3}F_3^*\otimes S_{4^2,3^3}F_1$ &(3,2,2,2) & (4,4,3,3,3)&2  & $S_{3,2^3}F_3^*\otimes S_{4^3,3,2}F_1$ &(3,2,2,2)  & (4,4,4,3,2) &1 \\ \hline
 & $S_{3,2^3}F_3^*\otimes S_{5,3^4}F_1$ &(3,2,2,2)  & (5,3,3,3,3) &1 & $S_{3^2,2,1}F_3^*\otimes S_{4^2,3^3}F_1$ &(3,3,2,1)  & (4,4,3,3,3) &1 \\ \hline

{\bf 10} & $S_{3^2,2^2}F_3^*\otimes S_{4^4,3}F_1$ &(3,3,2,2)  & (4,4,4,4,3) &2  & $S_{3^2,2^2} F_3^*\otimes S_{5,4^2,3^2}F_1$ &(3,3,2,2)  & (5,4,4,3,3)&1   \\ \hline
 & $S_{4,2^3} F_3^*\otimes S_{4^4,3}F_1$ &(4,2,2,2) & (4,4,4,4,3) &1   & $S_{3^3,1}\otimes S_{4^4,3}F_1$ &(3,3,3,1) & (4,4,4,4,3) &1  \\ \hline

{\bf 11} & $S_{3^3,2}F_3^*\otimes S_{5, 4^4}F_1$ &(3,3,3,2)  & (5,4,4,4,4)&2 & $S_{3^3,2}F_3^*\otimes S_{5^2,4^2,3}F_1$ &(3,3,3,2)  & (5,5,4,4,3) &1  \\ \hline
 & $S_{4,3,2^2}F_3^*\otimes S_{5,4^4}F_1$ &(4,3,2,2)  & (5,4,4,4,4)&1 &&&& \\ \hline

{\bf 12} & $S_{3^4}F_3^*\otimes S_{5^3,4^2}F_1$ &(3,3,3,3) & (5,5,5,4,4) &1  & $S_{3^4}F_3^*\otimes S_{6,5,4^3}F_1$ &(3,3,3,3) & (6,5,4,4,4) &1 \\ \hline
 & $S_{3^4}F_3^*\otimes S_{5^4,3}F_1$ &(3,3,3,3)  & (5,5,5,5,3)&1 & $S_{4,3^2,2}F_3^*\otimes S_{5^3,4^2}F_1$ &(4,3,3,2) & (5,5,5,4,4)&1 \\ \hline

{\bf 13} & $S_{4,3^3}F_3^*\otimes S_{5^5}F_1$ &(4,3,3,3)  & (5,5,5,5,5)&1  & $S_{4,3^3}F_3^*\otimes S_{6,5^3,4}F_1$ &(4,3,3,3)  & (6,5,5,5,4) &1  \\ \hline
 & $S_{4^2,3, 2}F_3^*\otimes S_{5^5}F_1$ &(4,4,3,2)  & (5,5,5,5,5) &1 &&&&  \\ \hline

{\bf 14} & $S_{4^2,3^2}F_3^*\otimes S_{6^2,5^3}F_1$ &(4,4,3,3)  & (6,6,5,5,5)&1 &&&& \\ \hline

{\bf 15} & $S_{4^3,3}F_3^*\otimes S_{6^4,5}F_1$ &(4,4,4,3)  & (6,6,6,6,5)&1 &&&& \\ \hline

{\bf 16} & $S_{4^4}F_3^*\otimes S_{7, 6^4}F_1$ &(4,4,4,4)  & (7,6,6,6,6) &1 &&&& \\ \hline
\end{tabular}
\end{center}

%\vfill
%\pagebreak

\subsubsection{ $V(\omega_2)$}

This representation is of dimension 147250 and has 25 graded components. We exhibit the first 13, as the others can be determined
by duality. (The representation is graded self-dual.)

\begin{center}
\begin{tabular}{|c||l|ll|c||l|ll|c|}
\hline
{\bf 0} & $F_1$ &(0,0,0,0)  & (1,0,0,0,0) & 1
 &&&& \\ \hline

{\bf 1} & $F_3^*\otimes \bigwedge^3 F_1$ &(1,0,0,0) & (1,1,1,0,0) & 1 &&&& \\ \hline

{\bf 2}  & $S_2F_3^*\otimes\bigwedge^5 F_1$ &(2,0,0,0)  & (1,1,1,1,1) & 1   & $\bigwedge^2 F_3^*\otimes \bigwedge^5 F_1$ &(1,1,0,0)  & (1,1,1,1,1) & 1   \\ \hline
& $\bigwedge^2 F_3^*\otimes S_{2,1^3}F_1$ &(1,1,0,0) & (2,1,1,1,0) & 1  &&&& \\ \hline

{\bf 3} & $S_{2,1}F_3^*\otimes S_{2^2,1^3}F_1$ &(2,1,0,0) & (2,2,1,1,1) & 1   & $\bigwedge^3 F_3^*\otimes S_{3,1^4}F_1$ &(1,1,1,0) & (3,1,1,1,1) & 1  \\ \hline
& $\bigwedge^3 F_3^*\otimes S_{2^3,1}F_1$ &(1,1,1,0) & (2,2,2,1,0) & 1   & $\bigwedge^3 F_3^*\otimes S_{2^2,1^3}F_1$ &(1,1,1,0) & (2,2,1,1,1) & 1   \\ \hline

{\bf 4} & $S_{2,2}F_3^*\otimes S_{2^4,1}F_1$ &(2,2,0,0) & (2,2,2,2,1) & 1& $S_{2,1,1}F_3^*\otimes S_{2^4,1}F_1$ &(2,1,1,0) & (2,2,2,2,1) & 2 \\ \hline
 & $S_{2,1,1}F_3^*\otimes S_{3,2,2,1,1}F_1$ &(2,1,1,0) & (3,2,2,1,1) & 1 & $\bigwedge^4F^*_3\otimes S_{3,2^2,1^2}F_1$ &(1,1,1,1) & (3,2,2,1,1) & 2 \\ \hline
& $\bigwedge^4F^*_3\otimes S_{2^4,1}F_1$ &(1,1,1,1) & (2,2,2,2,1) & 2 & $\bigwedge^4F^*_3\otimes S_{3,2^3}F_1$ &(1,1,1,1) & (3,2,2,2,0) & 1  \\ \hline

{\bf 5} & $S_{3,1,1}F_3^*\otimes S_{3,2^4}F_1$ &(3,1,1,0) & (3,2,2,2,2) & 1  & $S_{2,2,1}F_3^*\otimes S_{3,2^4}F_1$ & (2,2,1,0) & (3,2,2,2,2) & 2 \\ \hline
&$S_{2,2,1}F_3^*\otimes S_{3,3,2,2,1}F_1$ &(2,2,1,0) & (3,3,2,2,1) & 1  & $S_{2,1^3}F^*_3\otimes S_{3,2^4}F_1$ & (2,1,1,1) & (3,2,2,2,2) & 4 \\ \hline
&$S_{2,1^3}F^*_3\otimes S_{3^2,2^2,1}F_1$ &(2,1,1,1) & (3,3,2,2,1) & 2  & $S_{2,1^3}F^*_3\otimes S_{4,2^3,1}F_1$ & (2,1,1,1) & (4,2,2,2,1) & 1 \\ \hline
 & $S_{2,1^3}F^*_3\otimes S_{3^3,1^2}F_1$ &(2,1,1,1) & (3,3,3,1,1) & 1 &&&& \\ \hline

{\bf 6} & $S_{3,2,1}F_3^*\otimes S_{3^3,2^2}F_1$ &(3,2,1,0) & (3,3,3,2,2) & 1  & $S_{3,1^3}F^*_3\otimes S_{3^3,2^2}F_1]$ &(3,1,1,1) & (3,3,3,2,2) & 2  \\ \hline
 & $S_{3,1^3}F^*_3\otimes S_{4,3,2^3}F_1$ &(3,1,1,1) & (4,3,2,2,2) & 1 & $S_{2,2,2}F_3^*\otimes S_{3^4,1}F_1$ &(2,2,2,0) & (3,3,3,3,1) & 1  \\ \hline
 & $S_{2,2,2}F_3^*\otimes S_{4,3,2^3}F_1$ &(2,2,2,0) & (4,3,2,2,2) & 1 & $S_{2,2,2}F_3^*\otimes S_{3^3,2^2}F_1$ &(2,2,2,0) & (3,3,3,2,2) & 1  \\ \hline
  & $S_{2^2,1^2}F^*_3\otimes S_{3^3,2^2}F_1$ &(2,2,1,1) & (3,3,3,2,2) & 5 & $S_{2^2,1^2}F^*_3\otimes S_{3^4,1}F_1$ &(2,2,1,1) & (3,3,3,3,1) & 2  \\ \hline
   & $S_{2^2,1^2}F^*_3\otimes S_{4,3,2^3}F_1$ &(2,2,1,1) & (4,3,2,2,2) & 2 & $S_{2^2,1^2}F^*_3\otimes S_{4,3^2,2,1}F_1$ &(2,2,1,1) & (4,3,3,2,1) & 1  \\ \hline

{\bf 7} & $S_{4,1,1,1}F^*_3\otimes S_{3^5}F_1$ &(4,1,1,1) & (3,3,3,3,3) & 1  & $S_{3,3,1}F_3^*\otimes S_{3^5}F_1$ &(3,3,1,0) & (3,3,3,3,3) & 1 \\ \hline
 & $S_{3,2,2}F_3^*\otimes S_{3^5}F_1$ &(3,2,2,0) & (3,3,3,3,3) & 1 & $S_{3,2,2}F_3^*\otimes S_{4,3^3,2}F_1$ &(3,2,2,0) & (4,3,3,3,2) & 1 \\ \hline
 & $S_{3,2,1^2}F^*_3\otimes S_{3^5}F_1$ &(3,2,1,1) & (3,3,3,3,3) & 4 & $S_{3,2,1^2}F^*_3\otimes S_{4,3^3,2}F_1$ &(3,2,1,1) & (4,3,3,3,2) & 3 \\ \hline 
 & $S_{3,2,1^2}F^*_3\otimes S_{4^2,3,2^2}F_1$ &(3,2,1,1) & (4,4,3,2,2) & 1 & $S_{2^3,1}F^*_3\otimes S_{4,3^3,2}F_1$ &(2,2,2,1) & (4,3,3,3,2) & 6 \\ \hline 
 & $S_{2^3,1}F^*_3\otimes S_{3^5}F_1$ &(2,2,2,1) & (3,3,3,3,3) & 4 & $S_{2^3,1}F^*_3\otimes S_{4^2,3,2^2}F_1$ &(2,2,2,1) & (4,4,3,2,2) & 2 \\ \hline 
 & $S_{2^3,1}F^*_3\otimes S_{5,3^2,2^2}F_1$ &(2,2,2,1) & (5,3,3,2,2) & 1 & $S_{2^3,1}F^*_3\otimes S_{4^2,3^2,1}F_1$ &(2,2,2,1) & (4,4,3,3,1) & 1 \\ \hline

\end{tabular}
\end{center}
\hfill (The table continues on the next page.)

\begin{center}
\begin{tabular}{|c||l|ll|c||l|ll|c|}
\hline
{\bf 8} & $S_{4,2,1^2}F^*_3\otimes S_{4^2,3^3}F_1$ &(4,2,1,1) & (4,4,3,3,3) & 1  & $S_{3,3,2}F_3^*\otimes S_{4^2,3^3}F_1$ &(3,3,2,0) & (4,4,3,3,3) & 1 \\ \hline
 & $S_{3^2,1^2}F^*_3\otimes S_{4^2,3^3}F_1$ &(3,3,1,1) & (4,4,3,3,3) & 2 & $S_{3^2,1^2}F_3^*\otimes S_{4^3,3,2}F_1$ &(3,3,1,1) & (4,4,4,3,2) & 1 \\ \hline
& $S_{3,2^2,1}F^*_3\otimes S_{4^2,3^3}F_1$ &(3,2,2,1) & (4,4,3,3,3) & 6 & $S_{3,2^2,1}F^*_3\otimes S_{5,3^4}F_1$ &(3,2,2,1) & (5,3,3,3,3) & 3 \\ \hline

& $S_{3,2^2,1}F^*_3\otimes S_{4^3,3,2}F_1$ &(3,2,2,1) & (4,4,4,3,2) & 3 & $S_{3,2^2,1}F^*_3\otimes S_{5,4,3^2,2}F_1$ &(3,2,2,1) & (5,4,3,3,2) & 1 \\ \hline
& $S_{2^4}F^*_3\otimes S_{4^2,3^3}F_1$ &(2,2,2,2) & (4,4,3,3,3) & 5 & $S_{2^4}F^*_3\otimes S_{4^3,3,2}F_1$ &(2,2,2,2) & (4,4,4,3,2) & 4 \\ \hline

& $S_{2^4}F^*_3\otimes S_{5,3^4}F_1$ &(2,2,2,2) & (5,3,3,3,3) & 3 & $S_{2^4}F^*_3\otimes S_{5,4,3^2,2}F_1$ &(2,2,2,2) & (5,4,3,3,2) & 2 \\ \hline
& $S_{2^4}F^*_3\otimes S_{4^4,1}F_1$ &(2,2,2,2) & (4,4,4,4,1) & 1 & $S_{2^4}F^*_3\otimes S_{5,4^2,2^2}F_1$ &(2,2,2,2) & (5,4,4,2,2) & 1 \\ \hline

{\bf 9} & $S_{4,3,1,1}F^*_3\otimes S_{4^4,3}F_1$ &(4,3,1,1) & (4,4,4,4,3) & 1 & $
S_{4,2^2,1}F^*_3\otimes S_{4^4,3}F_1$ &(4,2,2,1) & (4,4,4,4,3) & 2  \\ \hline

& $S_{4,2^2,1}F^*_3\otimes S_{5,4^2,3^2}F_1$ &(4,2,2,1) & (5,4,4,3,3) & 1 

 & $S_{3,3,3}F_3^*\otimes S_{4^4,3}F_1$ &(3,3,3,0) & (4,4,4,4,3) & 1 \\ \hline 

 & $S_{3^2,2,1}F^*_3\otimes S_{4^4,3}F_1$ &(3,3,2,1) & (4,4,4,4,3) & 6

& $S_{3^2,2,1}F^*_3\otimes S_{5,4^2,3^2}F_1$ &(3,3,2,1) & (5,4,4,3,3) & 3 \\ \hline

& $S_{3^2,2,1}F^*_3\otimes S_{5^2,3^3}F_1$ &(3,3,2,1) & (5,5,3,3,3) & 1 
& $S_{3^2,2,1}F^*_3\otimes S_{5,4^3,2}F_1$ &(3,3,2,1) & (5,4,4,4,2) & 1 \\ \hline 

& $S_{3,2^3}F^*_3\otimes S_{4^4,3}F_1$ &(3,2,2,2) & (4,4,4,4,3) & 8

& $S_{3,2^3}F^*_3\otimes S_{5,4^2,3^2}F_1$ &(3,2,2,2) & (5,4,4,3,3) & 7 \\ \hline 

& $S_{3,2^3}F^*_3\otimes S_{5,4^3,2}F_1$ &(3,2,2,2) & (5,4,4,4,2) & 2

& $S_{3,2^3}F^*_3\otimes S_{6,4,3^3}F_1$ &(3,2,2,2) & (6,4,3,3,3) & 1 \\ \hline

& $S_{3,2^3}F^*_3\otimes S_{5^2,3^3}F_1$ &(3,2,2,2) & (5,5,3,3,3) & 1 
& $S_{3,2^3}F^*_3\otimes S_{5^2,4,3,2}F_1$ &(3,2,2,2) & (5,5,4,3,2) & 1 \\ \hline

{\bf 10} & $S_{4,3,2,1}F^*_3\otimes S_{5,4^4}F_1$ &(4,3,2,1) & (5,4,4,4,4) & 3  

& $S_{4,3,2,1}F^*_3\otimes S_{5^2,4^2,3}F_1$ &(4,3,2,1) & (5,5,4,4,3) & 1   \\ \hline

 & $S_{4,2^3}F^*_3\otimes S_{5,4^4}F_1$ &(4,2,2,2) & (5,4,4,4,4) & 5   

 & $S_{4,2^3}F^*_3\otimes S_{5^2,4^2,3}F_1$ &(4,2,2,2) & (5,5,4,4,3) & 2  \\ \hline

& $S_{4,2^3}F^*_3\otimes S_{6,4^3,3}F_1$ &(4,2,2,2) & (6,4,4,4,3) & 1  

& $S_{4,2^3}F^*_3\otimes S_{5^3,3^2}F_1$ &(4,2,2,2) & (5,5,5,3,3) & 1  \\ \hline

& $S_{3^3,1}F^*_3\otimes S_{5,4^4}F_1$ &(3,3,3,1) & (5,4,4,4,4) & 4  

& $S_{3^3,1}F^*_3\otimes S_{5^2,4^2,3}F_1$ &(3,3,3,1) & (5,5,4,4,3) & 3  \\ \hline

& $S_{3^3,1}F^*_3\otimes S_{6,4^3,3}F_1$ &(3,3,3,1) & (6,4,4,4,3) & 1

& $S_{3^2,2^2}F^*_3\otimes S_{5,4^4}F_1$ &(3,3,2,2) & (5,4,4,4,4) & 9  \\ \hline

& $S_{3^2,2^2}F^*_3\otimes S_{5^2,4^2,3}F_1$ &(3,3,2,2) & (5,5,4,4,3) & 7  

& $S_{3^2,2^2}F^*_3\otimes S_{6,4^3,3}F_1$ &(3,3,2,2) & (6,4,4,4,3) & 2  \\ \hline

& $S_{3^2,2^2}F^*_3\otimes S_{5^3,3^2}F_1$ &(3,3,2,2) & (5,5,5,3,3) & 2   

& $S_{3^2,2^2}F^*_3\otimes S_{5^3,4,2}F_1$ &(3,3,2,2) & (5,5,5,4,2) & 1 \\ \hline

& $S_{3^2,2^2}F^*_3\otimes S_{6,5,4,3^2}F_1$ &(3,3,2,2) & (6,5,4,3,3) & 1 & &&& \\ \hline

{\bf 11} & $S_{5,2^3}F^*_3\otimes S_{5^3,4^2}F_1$ &(5,2,2,2) & (5,5,5,4,4) & 1

& $S_{4^2, 2,1}F_3^*\otimes S_{5^3,4^2}F_1$ &(4,4,2,1) & (5,5,5,4,4) & 1 \\ \hline

& $S_{4,3^2,1}F^*_3\otimes S_{5^3,4^2}F_1$ &(4,3,3,1) & (5,5,5,4,4) & 2  

& $S_{4,3^2,1}F^*_3\otimes S_{5^4,3}F_1$ &(4,3,3,1) & (5,5,5,5,3) & 1\\ \hline

& $S_{4,3^2,1}F^*_3\otimes S_{6,5,4^3}F_1$ &(4,3,3,1) & (6,5,4,4,4) & 1

& $S_{4,3,2^2}F^*_3\otimes S_{5^3,4^2}F_1$ &(4,3,2,2) & (5,5,5,4,4) & 7  \\ \hline

& $S_{4,3,2^2}F^*_3\otimes S_{6,5,4^3}F_1$ &(4,3,2,2) & (6,5,4,4,4) & 3

& $S_{4,3,2^2}F^*_3\otimes S_{5^4,3}F_1$ &(4,3,2,2) & (5,5,5,5,3) & 2 \\ \hline

& $S_{4,3,2^2}F^*_3\otimes S_{6,5^2,4,3}F_1$ &(4,3,2,2) & (6,5,5,4,3) & 1

& $S_{3^3,2}F^*_3\otimes S_{5^3,4^2}F_1$ &(3,3,3,2) & (5,5,5,4,4) & 9  \\ \hline

& $S_{3^3,2}F^*_3\otimes S_{6,5,4^3}F_1$ &(3,3,3,2) & (6,5,4,4,4) & 6 

& $S_{3^3,2}F^*_3\otimes S_{5^4,3}F_1$ &(3,3,3,2) & (5,5,5,5,3) & 5  \\ \hline

& $S_{3^3,2}F^*_3\otimes S_{6,5^2,4,3}F_1$ &(3,3,3,2) & (6,5,5,4,3) & 3

& $S_{3^3,2}F^*_3\otimes S_{6^2,4^2,3}F_1$ &(3,3,3,2) & (6,6,4,4,3) & 1  \\ \hline

 & $S_{3^3,2}F^*_3\otimes S_{7,4^4}F_1$ &(3,3,3,2) & (7,4,4,4,4) & 1 &&&& \\ \hline

{\bf 12} & $S_{5,3,2^2}F^*_3\otimes S_{5^5}F_1$ &(5,3,2,2) & (5,5,5,5,5) & 2 

& $S_{5,3,2^2}F^*_3\otimes S_{6,5^3,4}F_1$ &(5,3,2,2) & (6,5,5,5,4) & 1 \\ \hline

 & $S_{4^2,3,1}F^*_3\otimes S_{5^5}F_1$ &(4,4,3,1) & (5,5,5,5,5) & 2 

 & $S_{4^2,3,1}F^*_3\otimes S_{6,5^3,4}F_1$ &(4,4,3,1) & (6,5,5,5,4) & 1 \\ \hline

& $S_{4^2,2^2}F^*_3\otimes S_{5^5}F_1$ &(4,4,2,2) & (5,5,5,5,5) & 4 

& $S_{4^2,2^2}F^*_3\otimes S_{6,5^3,4}F_1$ &(4,4,2,2) & (6,5,5,5,4) & 2 \\ \hline

& $S_{4^2,2^2}F^*_3\otimes S_{6^2,5,4^2}F_1$ &(4,4,2,2) & (6,6,5,4,4) & 1

& $S_{4,3^2,2}F^*_3\otimes S_{6,5^3,4}F_1$ &(4,3,3,2) & (6,5,5,5,4) & 9  \\ \hline

& $S_{4,3^2,2}F^*_3\otimes S_{5^5}F_1$ &(4,3,3,2) & (5,5,5,5,5) & 7 

& $S_{4,3^2,2}F^*_3\otimes S_{6^2,5,4^2}F_1$ &(4,3,3,2) & (6,6,5,4,4) & 3  \\ \hline

& $S_{4,3^2,2}F^*_3\otimes S_{7,5^2,4^2}F_1$ &(4,3,3,2) & (7,5,5,4,4) & 1 

& $S_{4,3^2,2}F^*_3\otimes S_{6^2,5^2,3}F_1$ &(4,3,3,2) & (6,6,5,5,3) & 1  \\ \hline

& $S_{3^4}F^*_3\otimes S_{6,5^3,4}F_1$ &(3,3,3,3) & (6,5,5,5,4) & 8 

& $S_{3^4}F^*_3\otimes S_{6^2,5,4^2}F_1$ &(3,3,3,3) & (6,6,5,4,4) & 4  \\ \hline

& $S_{3^4}F^*_3\otimes S_{5^5}F_1$ &(3,3,3,3) & (5,5,5,5,5) & 4 

& $S_{3^4}F^*_3\otimes S_{7,5^2,4^2}F_1$ &(3,3,3,3) & (7,5,5,4,4) & 2  \\ \hline

& $S_{3^4}F^*_3\otimes S_{6^2,5^2,3}F_1$ &(3,3,3,3) & (6,6,5,5,3) & 2 

& $S_{3^4}F^*_3\otimes S_{7,6,4^3}F_1$ &(3,3,3,3) & (7,6,4,4,4) & 1 \\ \hline

 & $S_{3^4}F^*_3\otimes S_{6^3,4,3}F_1$ &(3,3,3,3) & (6,6,6,4,3) & 1 &&&& \\ \hline

\end{tabular}
\end{center}
%\vfill

\pagebreak

\subsection{ $E_8$ graded by $\alpha_3$} This format is $(1,7,8,2)$.

\[
\raisebox{1em}{\xymatrix@R=3ex{ 
*{\circ}<3pt> \ar@{-}[r]_<{2}  &*{\circ}<3pt>
\ar@{-}[r]_<{4}  &
  *{\circ}<3pt> \ar@{-}[r]_<{5} & *{\circ}<3pt>
\ar@{-}[r]_<{6} & *{\circ}<3pt>
\ar@{-}[r]_<{7} &*{\circ}<3pt> \ar@{}[]_<{8}
\\ & *{\bullet}<3pt> \ar@{-}[u]^<{3} \\ & *{\circ}<3pt> \ar@{-}[u]^<{1} }}
\hspace{2cm}
\raisebox{1em}{\xymatrix@R=3ex{ 
*{x_1}<3pt> \ar@{-}[r]_<{}  &*{u}<3pt>
\ar@{-}[r]_<{}  &
  *{y_1}<3pt> \ar@{-}[r]_<{} & *{y_2}<3pt>
\ar@{-}[r]_<{} & *{y_3}<3pt>
\ar@{-}[r]_<{} &*{y_4}<3pt> \ar@{}[]_<{}
\\ & *{z_1}<3pt> \ar@{-}[u]^<{} \\ & *{z_2}<3pt> \ar@{-}[u]^<{} }}
\]

\subsubsection{ $V(\omega_8)$}
There are 9 graded components.

\begin{center}

\begin{tabular}{|c||l|ll|c||l|ll|c|}
\hline
{\bf 0} & $F_1^*$ &(0,0)	&	(0,0,0,0,0,0,-1)	&	1 &&&&\\ \hline
{\bf 1} & $F_3^*\otimes F_1$ &(1,0)	&	(1,0,0,0,0,0,0)	&	1 &&&&\\ \hline
{\bf 2} & $\bigwedge^2 F_3^*\otimes\bigwedge^3F_1$ &(1,1)	&	(1,1,1,0,0,0,0)	&	1 &&&&\\ \hline
{\bf 3} & $S_{2,1}F_3^*\otimes \bigwedge^5F_1$ &(2,1)	&	(1,1,1,1,1,0,0)	&	1 &&&& \\ \hline
{\bf 4} & $S_{3,1}F^*_3\otimes\bigwedge^7 F_1$ &(3,1)	&	(1,1,1,1,1,1,1)	&	1 
& $S_{2,2}F^*_3\otimes S_{2,1^5}F_1$ &(2,2)	&	(2,1,1,1,1,1,0)	&	1 \\ \hline
 & $S_{2,2}F^*_3\otimes\bigwedge^7F_1$ &(2,2)	&	(1,1,1,1,1,1,1)	&	1&&&&  \\ \hline
{\bf 5} & $S_{3,2}F^*_3\otimes S_{2^2,1^5}F_1$ &(3,2)	&	(2,2,1,1,1,1,1)	&	1 &&&&\\ \hline
{\bf 6} & $S_{3,3}F^*_3\otimes S_{2^4,1^3}F_1$ &(3,3)	&	(2,2,2,2,1,1,1)	&	1 &&&&\\ \hline
{\bf 7} & $S_{4,3}F^*_3\otimes S_{2^6,1}F_1$ &(4,3)	&	(2,2,2,2,2,2,1)	&	1 &&&& \\ \hline
{\bf 8} & $S_{4,4}F^*_3\otimes S_{3,2^6}F_1$ &(4,4)	&	(3,2,2,2,2,2,2)	&	1 &&&&\\ \hline
\end{tabular}

\end{center}

\subsubsection{ $V(\omega_1)$}
There are 15 graded components.

\begin{center}

\begin{tabular}{|c||l|ll|c||l|ll|c|}
\hline
{\bf 0} & $F_3$ &(0,-1)	&	(0,0,0,0,0,0,0)	&	1 &&&& \\ \hline
{\bf 1} & $\bigwedge^2 F_1$&(0,0)	&	(1,1,0,0,0,0,0)	&	1 & & &&\\ \hline
{\bf 2} & $F_3^*\otimes \bigwedge^4 F_1$&(1,0)	&	(1,1,1,1,0,0,0)	&	1& &  & &\\ \hline
{\bf 3} & $S_2 F_3^*\otimes\bigwedge^6F_1$ & (2,0)	&	(1,1,1,1,1,1,0)	&	1 
& $\bigwedge^2 F_3^*\otimes S_{2,1^4} F_1$& (1,1)	&	(2,1,1,1,1,0,0)	&	1 \\ \hline
&  $\bigwedge^2 F_3^*\otimes \bigwedge^6 F_1$ &(1,1)	&	(1,1,1,1,1,1,0)	&	1 &&&&\\ \hline
{\bf 4} &$S_{2,1}F_3^*\otimes S_{2,1^6}F_1$ &(2,1)	&	(2,1,1,1,1,1,1)	&	2 &$S_{2,1}F_3^*\otimes S_{2,2,1,1,1,1}F_1$ &(2,1)	&	(2,2,1,1,1,1,0)	&	1 \\ \hline

{\bf 5} &$S_{3,1}F^*_3\otimes S_{2^3,1^4}F_1$ &(3,1)	&	(2,2,2,1,1,1,1)	&	1

&$S_{2,2}F_3^*\otimes S_{2^3,1^4}F_1$ &(2,2)	&	(2,2,2,1,1,1,1)	&	1 \\ \hline

&$S_{2,2}F_3^*\otimes S_{3,2,1^5}F_1$ &(2,2)	&	(3,2,1,1,1,1,1)	&	1

&$S_{2,2}F_3^*\otimes S_{2,2,2,2,1,1}F_1$ &(2,2)	&	(2,2,2,2,1,1,0)	&	1 \\ \hline

{\bf 6} 
&$S_{3,2}F_3^*\otimes S_{2^5,1^2}F_1$ &(3,2)	&	(2,2,2,2,2,1,1)	&	2

&$S_{3,2}F_3^*\otimes S_{3,2^3,1^3}F_1$ &(3,2)	&	(3,2,2,2,1,1,1)	&	1 \\ \hline

&$S_{3,2}F_3^*\otimes S_{2,2,2,2,2,2}F_1$&(3,2)	&	(2,2,2,2,2,2,0)	&	1
& & & &\\ \hline

{\bf 7}  & $S_{4,2}F^*_3\otimes S_{2^7}F_1$&(4,2)	&	(2,2,2,2,2,2,2)	&	1 &$S_{4,2}F^*_3\otimes S_{3,2^5,1}F_1$ & (4,2)	&	(3,2,2,2,2,2,1)	&	1  \\ \hline

& $S_{3,3}F^*_3\otimes S_{2^7}F_1$&(3,3)	&	(2,2,2,2,2,2,2)	&	2
 &$S_{3,3}F^*_3\otimes S_{3,2^5,1}F_1$ & (3,3)	&	(3,2,2,2,2,2,1)	&	2 \\ \hline

&$S_{3,3}F^*_3\otimes S_{3^2,2^3,1^2}F_1$ &(3,3)	&	(3,3,2,2,2,1,1)	&	1&&&&\\ \hline

{\bf 8} 
& $S_{4,3}F^*_3\otimes  S_{3^2,2^5}F_1$&(4,3)	&	(3,3,2,2,2,2,2)	&	2
 &$S_{4,3}F^*_3\otimes S_{4,2^6}F_1$ & (4,3)	&	(4,2,2,2,2,2,2)	&	1 \\ \hline
 
 &$S_{4,3}F^*_3\otimes S_{3^3,2^3,1}F_1$ &(4,3)	&	(3,3,3,2,2,2,1)	&	1& &&& \\ \hline

{\bf 9} & $S_{5,3}F^*_3\otimes S_{3^4,2^3}F_1 $&(5,3)	&	(3,3,3,3,2,2,2)	&	1
 &$S_{4,4}F^*_3\otimes S_{3^4,2^3}F_1$ & (4,4)	&	(3,3,3,3,2,2,2)	&	1\\ \hline
 & $S_{4,4}F^*_3\otimes S_{4,3^2,2^4}F_1$&(4,4)	&	(4,3,3,2,2,2,2)	&	1
 &$S_{4,4}F^*_3\otimes S_{3^5,2,1}F_1$ & (4,4)	&	(3,3,3,3,3,2,1)	&	1\\ \hline

{\bf 10} &$S_{5,4}F^*_3\otimes S_{3^6,2}F_1$&(5,4)	&	(3,3,3,3,3,3,2)	&	2
&$S_{5,4}F^*_3\otimes S_{4,3^4,2^2}F_1$&(5,4)	&	(4,3,3,3,3,2,2)	&	1\\ \hline

{\bf 11} & $S_{6,4}F^*_3\otimes S_{4,3^6}F_1$&(6,4)	&	(4,3,3,3,3,3,3)	&	1 &$S_{5,5}F^*_3\otimes S_{4^2,3^4,2}F_1$ & (5,5)	&	(4,4,3,3,3,3,2)	&	1  \\ \hline
&$S_{5,5}F^*_3\otimes S_{4,3^6}F_1$ &(5,5)	&	(4,3,3,3,3,3,3)	&	1 &&&&\\ \hline

{\bf 12} &$S_{6,5}F^*_3\otimes S_{4^3,3^4}F_1$ &(6,5)	&	(4,4,4,3,3,3,3)	&	1& &&& \\ \hline
{\bf 13} &$S_{6,6}F^*_3\otimes S_{4^5,3^2}F_1$&(6,6)	&	(4,4,4,4,4,3,3)	&	1& & &&  \\ \hline
{\bf 14} &$S_{7,6}F^*_3\otimes S_{4^7}F_1$&(7,6)	&	(4,4,4,4,4,4,4)	&	1& & & &\\ \hline
\end{tabular}

\end{center}

%
%
%\vfill

\pagebreak

\subsubsection{ $V(\omega_2)$}
There are  21 graded components. We exhibit the first 11. The others can be found by duality.

\begin{center}

\begin{tabular}{|c||l|ll|c||l|ll|c|}
\hline
{\bf 0} & $F_1$ &(0,0)  & (1,0,0,0,0,0,0) & 1 &&&& \\ \hline

{\bf 1}&$F_3^*\otimes\bigwedge^3 F_1$&(1,0) & (1,1,1,0,0,0,0) & 1 & & && \\ \hline

{\bf 2} &$\bigwedge^2 F_3^*\otimes\bigwedge^5 F_1$&(2,0)  & (1,1,1,1,1,0,0) & 1

&$\bigwedge^2 F_3^*\otimes S_{2,1,1,1}F_1$&(1,1)  & (2,1,1,1,0,0,0) & 1\\ \hline

&$S_2F_3^*\otimes\bigwedge^5F_1$&(1,1)  & (1,1,1,1,1,0,0) & 1& & & &\\ \hline

{\bf 3} &$S_3 F^*_3\otimes \bigwedge^7 F_1 $&(3,0)  & (1,1,1,1,1,1,1) & 1

&$S_{2,1}F_3^*\otimes S_{2,1^5}F_1$& (2,1) & (2,1,1,1,1,1,0) & 2 \\ \hline

&$S_{2,1}F_3^*\otimes \bigwedge^7 F_1$&(2,1)  & (1,1,1,1,1,1,1) & 2 

&$S_{2,1}F_3^*\otimes S_{2,2,1,1,1}F_1$& (2,1) & (2,2,1,1,1,0,0) & 1 \\ \hline

{\bf 4} &$S_{3,1}F^*_3\otimes S_{2^2,1^5}F_1$&(3,1)  & (2,2,1,1,1,1,1) & 2

& $S_{3,1}F^*_3\otimes S_{2^3,1^3}F_1$&(3,1) & (2,2,2,1,1,1,0) & 1 \\ \hline

&$S_{2,2}F_3^*\otimes S_{2^2,1^5}F_1$&(2,2) & (2,2,1,1,1,1,1) & 3 

&$S_{2,2}F_3^*\otimes S_{3,2,1^4}F_1$ & (2,2) & (3,2,1,1,1,1,0) & 1 \\ \hline

&$S_{2,2}F_3^*\otimes S_{3,1^6}F_1$&(2,2)  & (3,1,1,1,1,1,1) & 1

&$S_{2,2}F_3^*\otimes S_{2,2,2,2,1}F_1$&(2,2)  & (2,2,2,2,1,0,0) & 1\\ \hline

&$S_{2,2}F_3^*\otimes S_{2^3,1^3}F_1$&(2,2)  & (2,2,2,1,1,1,0) & 1& & & &\\ \hline

{\bf 5} &$S_{4,1}F^*_3\otimes S_{2^4,1^3}F_1$&(4,1) & (2,2,2,2,1,1,1) & 1

& $S_{3,2}F_3^*\otimes S_{2^4,1^3}F_1$& (3,2) & (2,2,2,2,1,1,1) & 4 \\ \hline

&$S_{3,2}F_3^*\otimes S_{3,2^2,1^4}F_1$&(3,2) & (3,2,2,1,1,1,1) & 2

& $S_{3,2}F_3^*\otimes S_{2^5,1}F_1$& (3,2) & (2,2,2,2,2,1,0) & 2 \\ \hline

&$S_{3,2}F_3^*\otimes S_{3^2,1^5}F_1$&(3,2) & (3,3,1,1,1,1,1) & 1

& $S_{3,2}F_3^*\otimes S_{3,2^3,1^2}F_1$& (3,2) & (3,2,2,2,1,1,0) & 1 \\ \hline

{\bf 6} &$S_{4,2}F_3^*\otimes S_{2^6,1}F_1$&(4,2) & (2,2,2,2,2,2,1) & 4

& $S_{4,2}F_3^*\otimes S_{3,2^4,1^2}F_1$&(4,2) & (3,2,2,2,2,1,1) & 2 \\ \hline

&$S_{4,2}F_3^*\otimes S_{3^2,2^2,1^3}F_1$&(4,2) & (3,3,2,2,1,1,1) & 1

& $S_{4,2}F_3^*\otimes S_{3,2^5}F_1$& (4,2)  & (3,2,2,2,2,2,0) & 1\\ \hline

&$S_{3,3}F_3^*\otimes S_{3,2^4,1^2}F_1$&(3,3) & (3,2,2,2,2,1,1) & 4

& $S_{3,3}F_3^*\otimes S_{2^6,1}F_1$& (3,3) & (2,2,2,2,2,2,1) & 4\\ \hline

&$S_{3,3}F_3^*\otimes S_{3^2,2^2,1^3}F_1$&(3,3) & (3,3,2,2,1,1,1) & 2

& $S_{3,3}F_3^*\otimes S_{4,2^3,1^3}F_1$& (3,3) & (4,2,2,2,1,1,1) & 1 \\ \hline

&$S_{3,3}F_3^*\otimes S_{3^2,2^3,1} F_1$&(3,3) & (3,3,2,2,2,1,0) & 1

& $S_{3,3}F_3^*\otimes  S_{3,2^5}F_1$& (3,3) & (3,2,2,2,2,2,0) & 1 \\ \hline

{\bf 7}&$S_{5,2}F^*_3\otimes S_{3^2,2^4,1}F_1$&(5,2) & (3,3,2,2,2,2,1) & 1

& $S_{5,2}F^*_3\otimes S_{3,2^6}F_1$&(5,2) & (3,2,2,2,2,2,2) & 1 \\ \hline

&$S_{4,3} F_3^*\otimes S_{3^2,2^4,1}F_1$&(4,3) & (3,3,2,2,2,2,1) & 6

& $S_{4,3} F_3^*\otimes S_{3,2^6}F_1$& (4,3) & (3,2,2,2,2,2,2) & 6\\ \hline

&$S_{4,3} F_3^*\otimes S_{4,2^5,1}F_1$&(4,3) & (4,2,2,2,2,2,1) & 2

& $S_{4,3} F_3^*\otimes S_{3^3,2^2,1^2}F_1$& (4,3) & (3,3,3,2,2,1,1) & 2\\ \hline

&$S_{4,3} F_3^*\otimes S_{4,3,2^3,1^2}F_1$&(4,3) & (4,3,2,2,2,1,1) & 1

& $S_{4,3} F_3^*\otimes S_{3^4,1^3}F_1$& (4,3) & (3,3,3,3,1,1,1) & 1 \\ \hline

&$S_{4,3} F_3^*\otimes S_{3^3,2^3} F_1$&(4,3) & (3,3,3,2,2,2,0) & 1 & & && \\ \hline

{\bf 8}&$S_{5,3}F^*_3\otimes S_{3^3,2^4}F_1$&(5,3) & (3,3,3,2,2,2,2) & 4

& $S_{5,3}F^*_3\otimes S_{3^4,2^2,1}F_1$&(5,3) & (3,3,3,3,2,2,1) & 3 \\ \hline

&$S_{5,3}F^*_3\otimes S_{4,3,2^5}F_1$&(5,3) & (4,3,2,2,2,2,2) & 2

& $S_{5,3}F^*_3\otimes S_{4,3^2,2^3,1}F_1$& (5,3) & (4,3,3,2,2,2,1) & 1\\ \hline

&$S_{4,4}F_3^*\otimes S_{3^3,2^4}F_1$&(4,4) & (3,3,3,2,2,2,2) & 5

& $S_{4,4}F_3^*\otimes S_{4,3,2^5}F_1$& (4,4) & (4,3,2,2,2,2,2) & 4\\ \hline

&$S_{4,4}F_3^*\otimes S_{3^4,2^2,1}F_1$&(4,4) & (3,3,3,3,2,2,1) & 4

& $S_{4,4}F_3^*\otimes S_{4,3^2,2^3,1}F_1$& (4,4) & (4,3,3,2,2,2,1) & 2 \\ \hline

&$S_{4,4}F_3^*\otimes S_{5,2^6}F_1$&(4,4) & (5,2,2,2,2,2,2) & 1

& $S_{4,4}F_3^*\otimes S_{4^2,2^4,1}F_1$& (4,4) & (4,4,2,2,2,2,1) & 1 \\ \hline

&$S_{4,4}F_3^*\otimes S_{4,3^3,2,1^2}F_1$&(4,4) & (4,3,3,3,2,1,1) & 1

& $S_{4,4}F_3^*\otimes S_{3^5,2}F_1$& (4,4) & (3,3,3,3,3,2,0) & 1 \\ \hline

&$S_{4,4}F_3^*\otimes S_{3^5,1^2}F_1$&(4,4) & (3,3,3,3,3,1,1) & 1 & & && \\ \hline

{\bf 9} &$S_{6,3}F^*_3\otimes S_{4,3^3,2^3}F_1$&(6,3) & (4,3,3,3,2,2,2) & 1

& $S_{6,3}F^*_3\otimes S_{3^6,1}F_1$&(6,3) & (3,3,3,3,3,3,1) & 1\\ \hline

&$S_{6,3}F^*_3\otimes S_{3^5,2^2}F_1$&(6,3) & (3,3,3,3,3,2,2) & 1 

&$S_{5,4}F^*_3\otimes S_{3^5,2^2}F_1$ & (5,4) & (3,3,3,3,3,2,2) & 7\\ \hline

&$S_{5,4}F^*_3\otimes S_{4,3^3,2^3}F_1$&(5,4) & (4,3,3,3,2,2,2) & 6

& $S_{5,4}F^*_3\otimes S_{3^6,1}F_1$&(5,4) & (3,3,3,3,3,3,1) & 4 \\ \hline

&$S_{5,4}F^*_3\otimes S_{4,3^4,2,1}F_1$&(5,4) & (4,3,3,3,3,2,1) & 3 

&$S_{5,4}F^*_3\otimes S_{4^2,3,2^4}F_1$ & (5,4) & (4,4,3,2,2,2,2) & 2 \\ \hline

&$S_{5,4}F^*_3\otimes S_{5,3^2,2^4}F_1$&(5,4) & (5,3,3,2,2,2,2) & 1

& $S_{5,4}F^*_3\otimes S_{4^2,3^2,2^2,1}F_1$&(5,4) & (4,4,3,3,2,2,1) & 1 \\ \hline

{\bf 10} &$S_{6,4}F^*_3\otimes S_{4,3^5,2}F_1$&(6,4) & (4,3,3,3,3,3,2) & 6

& $S_{6,4}F^*_3\otimes S_{3^7}F_1$&(6,4) & (3,3,3,3,3,3,3) & 3 \\ \hline

&$S_{6,4}F^*_3\otimes S_{4^2,3^3,2^2}F_1$&(6,4) & (4,4,3,3,3,2,2) & 2

&$S_{6,4}F^*_3\otimes S_{5,3^4,2^2}F_1$ & (6,4) & (5,3,3,3,3,2,2) & 1 \\ \hline

&$S_{6,4}F^*_3\otimes S_{4^3,3,2^3}F_1$&(6,4) & (4,4,4,3,2,2,2) & 1

& $S_{6,4}F^*_3\otimes S_{4^2,3^4,1}F_1$&(6,4) & (4,4,3,3,3,3,1) & 1 \\ \hline

&$S_{5,5}F^*_3\otimes S_{4,3^5,2}F_1$&(5,5) & (4,3,3,3,3,3,2) & 7

&$S_{5,5}F^*_3\otimes S_{4^2,3^3,2^2}F_1$ & (5,5) & (4,4,3,3,3,2,2) & 5 \\ \hline

&$S_{5,5}F_3^*\otimes S_{3^7}F_1 $&(5,5) & (3,3,3,3,3,3,3) & 3

& $S_{5,5}F^*_3\otimes S_{5,3^4,2^2}F_1$&(5,5) & (5,3,3,3,3,2,2) & 2 \\ \hline

&$S_{5,5}F^*_3\otimes S_{4^2,3^4,1}F_1$&(5,5) & (4,4,3,3,3,3,1) & 2

&$S_{5,5}F^*_3\otimes S_{5,4,3^2,2^3}F_1$ & (5,5) & (5,4,3,3,2,2,2) & 1 \\ \hline

&$S_{5,5}F^*_3\otimes S_{4^3,3^2,2,1}F_1$&(5,5) & (4,4,4,3,3,2,1) & 1

& $S_{5,5}F^*_3\otimes S_{4^3,3,2^3}F_1$&(5,5) & (4,4,4,3,2,2,2) & 1 \\ \hline

\end{tabular}

\end{center}

%\pagebreak

\subsection{ $E_8$ graded by $\alpha_2$} This format is $(2,8,7,1)$.

\[
\raisebox{1em}{\xymatrix@R=3ex{ *{\circ}<3pt> \ar@{-}[r]_<{1}  &
*{\circ}<3pt> \ar@{-}[r]_<{3}  &*{\circ}<3pt>
\ar@{-}[r]_<{4}  &
  *{\circ}<3pt> \ar@{-}[r]_<{5} & *{\circ}<3pt>
\ar@{-}[r]_<{6}  & *{\circ}<3pt>
\ar@{-}[r]_<{7} &*{\circ}<3pt> \ar@{}[]_<{8}
\\ & & *{\bullet}<3pt> \ar@{-}[u]^<{2}  }}
\hspace{2cm}
\raisebox{1em}{\xymatrix@R=3ex{ *{x_2}<3pt> \ar@{-}[r]_<{}  &
*{x_1}<3pt> \ar@{-}[r]_<{}  &*{u}<3pt>
\ar@{-}[r]_<{}  &
  *{y_1}<3pt> \ar@{-}[r]_<{} & *{y_2}<3pt>
\ar@{-}[r]_<{}  & *{y_3}<3pt>
\ar@{-}[r]_<{} &*{y_4}<3pt> \ar@{}[]_<{}
\\ & & *{z_1}<3pt> \ar@{-}[u]^<{}  }}
\]

\subsubsection{ $V(\omega_8)$}
There are 7 graded components.

\begin{center}

\begin{tabular}{|c||l|ll|c||l|ll|c|}
\hline
{\bf 0} & $F_1^*$ &&  (0,0,0,0,0,0,0,-1)  & 1 &&&&\\ \hline
{\bf 1} & $\bigwedge^2 F_1$ &&  (1,1,0,0,0,0,0,0) & 1 &&&&\\ \hline
{\bf 2} & $\bigwedge^5 F_1$ &&  (1,1,1,1,1,0,0,0) & 1 &&&&\\ \hline
{\bf 3} & $S_{2,1^6} F_1$ &&  (2,1,1,1,1,1,1,0) & 1 & $\bigwedge^8 F_1$ &&  (1,1,1,1,1,1,1,1) & 1 \\ \hline
{\bf 4} & $S_{2^3,1^5} F_1$ &&  (2,2,2,1,1,1,1,1) & 1 &&&&\\ \hline
{\bf 5} & $S_{2^6,1^2} F_1$ &&  (2,2,2,2,2,2,1,1) & 1  &&&&\\ \hline
{\bf 6} & $S_{3,2^7} F_1$ &&  (3,2,2,2,2,2,2,2) & 1 &&&&\\ \hline
\end{tabular}

\end{center}

\subsubsection{ $V(\omega_1)$}
There are 11 graded components.

\begin{center}

\begin{tabular}{|c||l|ll|c||l|ll|c|}
\hline
{\bf 0} & $F_1$ &&  (1,0,0,0,0,0,0,0) & 1 &&&& \\ \hline
{\bf 1} & $\bigwedge^4 F_1$ &&  (1,1,1,1,0,0,0,0) & 1 & & &&\\ \hline
{\bf 2} & $S_{2,1^5} F_1$ &&  (2,1,1,1,1,1,0,0) & 1
& $\bigwedge^7 F_1$ &&  (1,1,1,1,1,1,1,0) & 1\\ \hline
{\bf 3} & $S_{3,1^7} F_1$ &&  (3,1,1,1,1,1,1,1) & 1
& $S_{2^3,1^4} F_1$ &&  (2,2,2,1,1,1,1,0) & 1  \\ \hline
& $S_{2^2,1^6} F_1$ &&  (2,2,1,1,1,1,1,1) & 1 &&&&\\ \hline
{\bf 4} & $S_{3,2^3,1^4} F_1$ &&  (3,2,2,2,1,1,1,1) & 1
& $S_{2^6,1} F_1$ &&  (2,2,2,2,2,2,1,0) & 1  \\ \hline
& $S_{2^5,1^3} F_1$ &&  (2,2,2,2,2,1,1,1) & 1 &&&&\\ \hline
{\bf 5} & $S_{3^2,2^4,1^2} F_1$ &&  (3,3,2,2,2,2,1,1) & 1
& $S_{3,2^6,1} F_1$ &&  (3,2,2,2,2,2,2,1) & 2  \\ \hline
& $S_{2^8} F_1$ &&  (2,2,2,2,2,2,2,2) & 1 &&&&\\ \hline
{\bf 6} & $S_{4,3,2^6} F_1$ &&  (4,3,2,2,2,2,2,2) & 1
& $S_{3^4,2^3,1} F_1$ &&  (3,3,3,3,2,2,2,1) & 1 \\ \hline
& $S_{3^3,2^5} F_1$ &&  (3,3,3,2,2,2,2,2) & 1 &&&&\\ \hline
{\bf 7} & $S_{4,3^4,2^3} F_1$ &&  (4,3,3,3,3,2,2,2) & 1
& $S_{3^7,1} F_1$ &&  (3,3,3,3,3,3,3,1) & 1  \\ \hline
& $S_{3^6,2^2} F_1$ &&  (3,3,3,3,3,3,2,2) & 1 &&&&\\ \hline
{\bf 8} & $S_{4^2,3^5,2} F_1$ &&  (4,4,3,3,3,3,3,2) & 1
& $S_{4,3^7} F_1$ &&  (4,3,3,3,3,3,3,3) & 1 \\ \hline
{\bf 9} & $S_{4^4,3^4} F_1$ &&  (4,4,4,4,3,3,3,3) & 1 & & &&  \\ \hline
{\bf 10} & $S_{4^7,3} F_1$ &&  (4,4,4,4,4,4,4,3) & 1& & & &\\ \hline
\end{tabular}

\end{center}

\pagebreak

\subsubsection{ $V(\omega_2)$}
There are  17 graded components. We present the first 9. The others can be filled by duality.

\begin{center}

\begin{tabular}{|c||l|ll|c||l|ll|c|}
\hline
{\bf 0} &	$\mathbb C$	&&	(0,0,0,0,0,0,0,0)	&	1 &&&& \\ \hline
{\bf 1}& $\bigwedge^3 F_1$		&&	(1,1,1,0,0,0,0,0)	&	1  & & && \\ \hline
{\bf 2}&	$S_{2,1^4}F_1$	&&	(2,1,1,1,1,0,0,0)	&	1
&	$\bigwedge^6 F_1$		&&	(1,1,1,1,1,1,0,0)	&	1\\ \hline

{\bf 3} &	$S_{3,1^6}F_1$	&&	(3,1,1,1,1,1,1,0)	&	1
&	$S_{2^3,1^3}F_1$	&&	(2,2,2,1,1,1,0,0)	&	1 \\ \hline
&	$S_{2^2,1^5}F_1$	&&	(2,2,1,1,1,1,1,0)	&	1
&	$S_{2,1^7}F_1$	&&	(2,1,1,1,1,1,1,1)	&	2 \\ \hline

{\bf 4}&	$S_{3,2^3,1^3}F_1$	&&	(3,2,2,2,1,1,1,0)	&	1
&	$S_{3,2^2,1^5}F_1$	&&	(3,2,2,1,1,1,1,1)	&	2 \\ \hline
&	$S_{2^6}F_1$	&&	(2,2,2,2,2,2,0,0)	&	1
&	$S_{2^5,1^2}F_1$	&&	(2,2,2,2,2,1,1,0)	&	1 \\ \hline
&	$S_{2^4,1^4}F_1$	&&	(2,2,2,2,1,1,1,1)	&	2  & & && \\ \hline

{\bf 5} &	$S_{4,2^4,1^3}F_1$	&&	(4,2,2,2,2,1,1,1)	&	1
&	$S_{3^3,2,1^4}F_1$	&&	(3,3,3,2,1,1,1,1)	&	1  \\ \hline
&	$S_{3^2,2^4,1}F_1$	&&	(3,3,2,2,2,2,1,0)	&	1
&	$S_{3^2,2^3,1^3}F_1$	&&	(3,3,2,2,2,1,1,1)	&	1  \\ \hline
&	$S_{3,2^6}F_1$	&&	(3,2,2,2,2,2,2,0)	&	1
&	$S_{3,2^5,1^2}F_1$	&&	(3,2,2,2,2,2,1,1)	&	4  \\ \hline
&	$S_{2^7,1}F_1$	&&	(2,2,2,2,2,2,2,1)	&	2  & & && \\ \hline

{\bf 6}&	$S_{4,3^2,2^3,1^2}F_1$	&&	(4,3,3,2,2,2,1,1)	&	1
&	$S_{4,3,2^5,1}F_1$	&&	(4,3,2,2,2,2,2,1)	&	2 \\ \hline
&	$S_{4,2^7}F_1$	&&	(4,2,2,2,2,2,2,2)	&	3
&	$S_{3^4,2^3}F_1$	&&	(3,3,3,3,2,2,2,0)	&	1 \\ \hline
&	$S_{3^4,2^2,1^2}F_1$	&&	(3,3,3,3,2,2,1,1)	&	2
&	$S_{3^3,2^4,1}F_1$	&&	(3,3,3,2,2,2,2,1)	&	3  \\ \hline
&	$S_{3^2,2^6}F_1$	&&	(3,3,2,2,2,2,2,2)	&	3  & & && \\ \hline

{\bf 7}&	$S_{5,3^2,2^5}F_1$	&&	(5,3,3,2,2,2,2,2)	&	1
&	$S_{4^2,3^2,2^3,1}F_1$	&&	(4,4,3,3,2,2,2,1)	&	1 \\ \hline
&	$S_{4^2,3,2^5}F_1$	&&	(4,4,3,2,2,2,2,2)	&	1
&	$S_{4,3^5,1^2}F_1$	&&	(4,3,3,3,3,3,1,1)	&	1\\ \hline
&	$S_{4,3^4,2^2,1}F_1$	&&	(4,3,3,3,3,2,2,1)	&	2
&	$S_{4,3^3,2^4}F_1$	&&	(4,3,3,3,2,2,2,2)	&	4  \\ \hline
&	$S_{3^7}F_1$	&&	(3,3,3,3,3,3,3,0)	&	1
&	$S_{3^6,2,1}F_1$	&&	(3,3,3,3,3,3,2,1)	&	3 \\ \hline
&	$S_{3^5,2^3}F_1$	&&	(3,3,3,3,3,2,2,2)	&	3  & & && \\ \hline

{\bf 8}&	$S_{5,4,3^3,2^3}F_1$	&&	(5,4,3,3,3,2,2,2)	&	1
&	$S_{5,3^5,2^2}F_1$	&&	(5,3,3,3,3,3,2,2)	&	2 \\ \hline
&	$S_{4^4,2^4}F_1$	&&	(4,4,4,4,2,2,2,2)	&	1
&	$S_{4^3,3^3,2,1}F_1$	&&	(4,4,4,3,3,3,2,1)	&	1 \\ \hline
&	$S_{4^3,3^2,2^3}F_1$	&&	(4,4,4,3,3,2,2,2)	&	1
&	$S_{4^2,3^5,1}F_1$	&&	(4,4,3,3,3,3,3,1)	&	2 \\ \hline
&	$S_{4^2,3^4,2^2}F_1$	&&	(4,4,3,3,3,3,2,2)	&	4
&	$S_{4,3^6,2}F_1$	&&	(4,3,3,3,3,3,3,2)	&	5  \\ \hline
&	$S_{3^8}F_1$	&&	(3,3,3,3,3,3,3,3)	&	1  & & && \\ \hline

\end{tabular}

\end{center}

}

\bigskip

\section{Some patterns and observations} \label{sec-observations}

In this section we observe some general patterns that can be deduced from the tables. They will form a basis for a conjecture.

\begin{prp} Let us restrict to the  formats of resolutions of cyclic modules, i.e. $p=2$.
Then the tensors giving the first graded components of three critical representations are:
\begin{enumerate}
\item $W_1(d_3)=F_2^*\otimes\bigwedge^2 F_1$ gives the tensor of multiplicative structure $F_1\otimes F_1\rightarrow F_2$ on $\FF_\bullet$,
\item $W_1(d_2)=F_2\otimes F^*_3\otimes  F_1$ gives the tensor of multiplicative structure $F_1\otimes F_2\rightarrow F_3$ on $\FF_\bullet$,
\item $W_1(a_2)=F_3^*\otimes\bigwedge^3 F_1$ gives the tensor of multiplicative structure $\bigwedge^3 F_1\rightarrow F_3$ on $\FF_\bullet$.
\end{enumerate}
\end{prp}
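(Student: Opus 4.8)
The plan is to read everything off the explicit decompositions of Sections~\ref{section:dn}--\ref{section:e8} together with the dictionary of Proposition~\ref{prp-decrgen}, after specializing to $p=2$. First I would pin down the three modules. When $p=2$ one has $r_1=1$, so $F_0$ is a line and $\gamma$ is empty, and the weights $\mu$ defining $W(d_3),W(d_2),W(a_2)$ are, respectively, $\alpha=(1)$ (or $a=1$ when $r_3=1$), $\beta=(1)$, and $b=1$. Substituting these into the formulas for $\sigma(\mu),\tau(\mu),\theta(\mu),\phi(\mu)$ gives in all three cases $S_{\phi(\mu)}F_0=\CC$, while $S_{\theta(\mu)}F_2$ equals $F_2^*$, $F_2$, and $\bigwedge^{f_2}F_2$ respectively, and the weight $\lambda(\sigma(\mu),\tau(\mu),a)$ is the fundamental weight of ${\mathfrak g}(T_{p,q,r})$ at the extremal node $z_{r-1}$, $y_{q-1}$, $x_{p-1}$ respectively. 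One must be careful here: Proposition~\ref{prp-decrgen} uses the lowest weight module $V_\lambda$, whereas the tables are written for highest weight modules, so the relevant table entry is the one for the contragredient fundamental representation; for $E_6$ this interchanges the entries attached to the two short arms, and for the $D_n$ spin representations one invokes the identification $V(\omega_{n-1})^*\cong V(\omega_n)$ from the outer automorphism of $\mathfrak{so}(2n)$. The outcome is $W(d_3)=F_2^*\otimes\Res V_{\omega_{z_{r-1}}}$, $W(d_2)=F_2\otimes\Res V_{\omega_{y_{q-1}}}$, $W(a_2)=\bigl(\bigwedge^{f_2}F_2\bigr)\otimes\Res V_{\omega_{x_{p-1}}}$, each graded by the $z_1$-grading of the fundamental module.

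Next I would extract the degree-one components. By Proposition~\ref{prp-decrgen} the degree-zero piece of each of these fundamental modules is $V_\lambda(0)=S_{\tau}F_1\otimes S_{\sigma}F_3$ (the first row of the corresponding table), here $F_3$, $F_1$, $F_1$; and the degree-one piece $V_\lambda(1)$ is a quotient of ${\mathfrak g}_1\otimes V_\lambda(0)$ with ${\mathfrak g}_1=F_3^*\otimes\bigwedge^2 F_1$. Computing these cokernels, or equivalently reading the second row of each table in Sections~\ref{section:dn}--\ref{section:e8} (and the closed formulas of Section~\ref{section:dn} in the $D_n$ families), gives $V_{\omega_{z_{r-1}}}(1)=\bigwedge^2 F_1$, $V_{\omega_{y_{q-1}}}(1)=F_3^*\otimes F_1$, and $V_{\omega_{x_{p-1}}}(1)=F_3^*\otimes\bigwedge^3 F_1$, each up to the powers of $\det F_1$ and $\det F_3$ that the statement suppresses. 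Tensoring with the $F_0$- and $F_2$-factors of the first step and absorbing the determinant twists yields the three displayed formulas for $W_1(d_3),W_1(d_2),W_1(a_2)$.

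Finally I would identify these with the multiplicative structure. A free resolution of length at most three admits (non-canonically) the structure of a graded-commutative differential graded algebra over $R$, so $\FF_\bullet$ carries graded-commutative products $F_1\otimes F_1\to F_2$ and $F_1\otimes F_2\to F_3$ and the induced ternary product $\bigwedge^3 F_1\to F_3$; in the universal situation these are elements of $\operatorname{Hom}(\bigwedge^2 F_1,F_2)$, $\operatorname{Hom}(F_1\otimes F_2,F_3)$, and $\operatorname{Hom}(\bigwedge^3 F_1,F_3)$, i.e.\ of the $\prod_i GL(F_i)$-module duals of the three representations just computed. Comparing with the list of semigroup generators in Proposition~\ref{prp-aa} then shows that $W_1(d_3),W_1(d_2),W_1(a_2)$ are precisely the degree-one parts of the generators whose images control the multiplications displayed in (1)--(3).

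The only step that is not bookkeeping is the second one: one must verify that the degree-one component of each of these fundamental modules -- some of which, in the $E_8$ cases, are very large -- is as small and as uniform as asserted. At the level of this paper that is exactly what the first two rows of the tables in Sections~\ref{section:dn}--\ref{section:e8} record, so the proof reduces to a table lookup; a table-free argument would instead observe that ${\mathfrak g}_1=F_3^*\otimes\bigwedge^2 F_1$ acts on the one-dimensional lowest weight line of $V_\lambda$ with image a single Cartan component, which forces $V_\lambda(1)$ to coincide with that component. I expect this uniform verification to be the main obstacle to a proof independent of the tables.
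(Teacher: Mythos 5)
Your weight bookkeeping is correct: for $p=2$ the three critical weights are the fundamental weights at the extremal nodes $z_{r-1}$, $y_{q-1}$, $x_{p-1}$, the $F_0$- and $F_2$-factors come out as you say, and the degree-one pieces read off the tables do give the three displayed formulas. But your route is genuinely different from the paper's, and as written it has a gap. The paper's entire proof is a one-line appeal to the parabolic BGG resolution (Section 9.2 of \cite{Ku}): since $a=0$ for all three weights, the only length-one term of the resolution contributes in $z_1$-degree one, so $V_\lambda(1)$ is $\mathfrak g_1\otimes V_\lambda(0)$ with the single irreducible $\mathfrak g_0$-component $V^{\mathfrak g_0}_{s_{z_1}\cdot\lambda}$ removed --- a computation that is uniform in $q$ and $r$ and valid for every $T_{2,q,r}$, Dynkin or not. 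Your argument instead reduces to a lookup in the tables of Sections \ref{section:dn}--\ref{section:e8}, which only exist for the Dynkin graphs; the Proposition, unlike the last Proposition of Section \ref{sec-observations}, is not restricted to Dynkin formats, so the table lookup says nothing about, e.g., $T_{2,4,4}$ or $T_{2,3,6}$, where the fundamental representations are infinite dimensional.

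The table-free argument you sketch to cover the general case is the right idea but is stated incorrectly: the summand of $\mathfrak g_1\otimes V_\lambda(0)$ that survives in $V_\lambda(1)$ is \emph{not} the Cartan component. For $W(d_2)$ one has $\mathfrak g_1\otimes V_\lambda(0)=(F_3^*\otimes\bigwedge^2F_1)\otimes F_1^*$, whose Cartan component is $F_3^*\otimes S_{(1,1,0,\ldots,0,-1)}F_1$, while the tables (and the truth) give the complementary summand $F_3^*\otimes F_1$; likewise for $W(d_3)$ the surviving piece $\CC\otimes\bigwedge^2F_1$ of $(F_3^*\otimes F_3)\otimes\bigwedge^2F_1$ is the non-Cartan summand, the Cartan summand $\mathfrak{sl}(F_3)\otimes\bigwedge^2F_1$ being exactly the $V^{\mathfrak g_0}_{s_{z_1}\cdot\lambda}$ that BGG deletes. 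So the uniform verification you correctly identify as the crux must be done via the BGG/Kostant computation, not via a Cartan-component heuristic; with that replacement your argument collapses to the paper's. A minor further point: identifying the resulting tensors with the Buchsbaum--Eisenbud multiplicative structure on $\FF_\bullet$ is by the construction of ${\hat R}_{gen}$ in \cite{JWm18}, not something one can deduce from the $GL$-module types alone as your last step suggests.
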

\begin{proof} The formulas follow immediately from the parabolic BGG complex. See \cite{Ku}, Section 9.2.
\end{proof}

For other formats we have a similar interpretation.

Consider the format with the ranks $(r_1, r_2, r_3)$. We have a comparison map from the Buchsbaum--Rim complex to the complex $\FF_\bullet$.
\[\begin{xy}
\xymatrix{
0 \ \ar[r]  &  \ \ \ \ \  F_3 \ \ \ \ \ \ar[r]^{d_3} & \ \ \ \ \ F_2 \ \ \ \ \ \ar[r]^{d_2} & \ F_1 \ \ar[r]^{d_1} &  \ F_0 
\\
\cdots \ \ar[r]  &    \bigwedge^{r_1+2}F_1\otimes \otimes F^*_0\otimes\bigwedge^{r_1}F^*_0 \ar[r] \ar[u]_{v_3} & \bigwedge^{r_1+1}F_1\otimes \bigwedge^{r_1}F^*_0  \ar[r] \ar[u]_{v_1} & \ F_1 \ \ar[r]^{d_1}  \ar[u]_{=}&  \ F_0  \ar[u]_{=}
}
\end{xy}\]
Similarly we have a lifting
\[\begin{xy}
\xymatrix{
0 \ \ar[r]  &  \ \ \   F_3 \ \ \  \ar[r]^{d_3} & \ \ \ F_2 \ \ \  \ar[r]^{d_2} & \ F_1 \ \ar[r]^{d_1} &  \ F_0 
\\
 &       & \ar[ul]_{v_2} \bigwedge^{r_1}F_1\otimes F_2  \ar[u]_{q_1} & &  
}
\end{xy}\]
of the cycle $q_1=v_1d_1-(\bigwedge^{r_1}d_1\otimes d_2)$, which we denote $v_2$.

\begin{prp} Let us deal with a general format.
Then the tensors giving the first graded components of three critical representations are:
\begin{enumerate}
\item $W_1(d_3)=F_2^*\otimes\bigwedge^{r_1+1} F_1$ gives the tensor $v_1$ in the diagram above,
\item $W_1(d_2)=F_2\otimes F^*_3\otimes  \bigwedge^{r_1}F_1$ gives the tensor $v_2$ in the diagram above,
\item $W_1(a_2)=F_0^*\otimes F_3^*\otimes\bigwedge^{r_1+2} F_1$ gives the tensor $v_3$ in the diagram above.
\end{enumerate}
\end{prp}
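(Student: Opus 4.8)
# Proof Proposal

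The plan is to reduce the general format to the cyclic case $p=2$ already handled in the previous proposition, and then to identify the extra tensor factors $F_0^*$ that appear. The starting point is that all three statements are claims about the \emph{first graded component} $W_1(\mu)$ of the critical representations $W(d_3)$, $W(d_2)$, $W(a_2)$; by Proposition~\ref{prp-decrgen} these are exactly the degree-one pieces of the restriction $\Res(V_\lambda)$ for the appropriate fundamental weights, tensored with the $\mathfrak{gl}(F_0)\times\mathfrak{gl}(F_2)$ factors $S_{\phi(\mu)}F_0\otimes S_{\theta(\mu)}F_2$. So the first step is bookkeeping: for each of the three weights $\mu$ listed (namely $\alpha=(1)$; $\beta=(1)$; $\gamma=(1)$, all other parameters zero), compute $\phi(\mu)$, $\theta(\mu)$, $\sigma(\mu)$, $\tau(\mu)$ from the displayed formulas, and read off which $GL(F_0)$- and $GL(F_2)$-representations multiply the Lie-algebra piece. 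One finds $\phi$ contributes a trivial or one-box factor on $F_0$ and $\theta$ a one-box or trivial factor on $F_2$, which is where the $F_2^*$, $F_2$, $F_0^*$ decorations in (1)--(3) come from; this matches the explicit low-degree rows of all the tables in Sections~\ref{section:dn}--\ref{section:e8}, which serves as a consistency check.

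Next I would extract the Lie-algebra part. As in the $p=2$ proof, the graded pieces $V_\lambda(i)$ of a lowest-weight fundamental representation, restricted to $\mathfrak{gl}(F_1)\times\mathfrak{gl}(F_3)$, are governed by the parabolic BGG resolution with respect to the parabolic whose Levi is $\mathfrak{g}_0 = \mathfrak{sl}(F_1)\times\mathfrak{sl}(F_3)\times\CC$ and whose nilradical $\mathfrak{g}_1$ (spanned by the $z_1$-simple-root part) is, in the relevant cases, $F_3^*\otimes\bigwedge^2 F_1$ (type $D$) or its analogue. The degree-one component $V_\lambda(1)$ is then $\mathfrak{g}_1\otimes V_\lambda(0)$ modulo the image of the relevant Kostant cohomology piece; for a fundamental representation this reduces to a single tensor/wedge operation. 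Carrying this out for the three fundamental weights attached to the extremal nodes $x_{p-1}$, $y_{q-1}$, $z_{r-1}$ gives, respectively, the factors $\bigwedge^{r_1+1}F_1$, $\bigwedge^{r_1}F_1$ (with an $F_3^*$), and $\bigwedge^{r_1+2}F_1$ (with an $F_3^*$) — exactly the $F_1$-content asserted in (1)--(3). The cleanest way to package this is to cite \cite{Ku}, Section~9.2, for the parabolic BGG computation, exactly as in the $p=2$ case, and just note that only the combinatorics of the single weight and the single parabolic step change.

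Finally I would match these representation-theoretic answers with the homological tensors $v_1,v_2,v_3$ defined by the two comparison diagrams. Here the key observation is that $v_1$ is the comparison map from the Buchsbaum--Rim complex resolving $\bigwedge^{r_1+1}F_1\otimes\bigwedge^{r_1}F_0^*$ (at the appropriate spot) into $F_2$, so it is naturally an element of $\Hom{\bigwedge^{r_1+1}F_1}{F_2}$, i.e. a vector in $F_2^*\otimes\bigwedge^{r_1+1}F_1$ — the stated $W_1(d_3)$; similarly $v_2$ lives in $\Hom{\bigwedge^{r_1}F_1\otimes F_2}{F_3}$, i.e. in $F_2\otimes F_3^*\otimes\bigwedge^{r_1}F_1 = W_1(d_2)$; and $v_3$, the lift of $q_1$ through $d_3$, takes $\bigwedge^{r_1+2}F_1\otimes F_0^*\otimes\bigwedge^{r_1}F_0^*$ to $F_3$, and after absorbing one $\bigwedge^{r_1}F_0^*$ into a fixed orientation-type trivialization (using $f_0=r_1+r_2$ only through the $\det F_0$ factor already present in $R_a$ via $\phi(\mu)$), lands in $F_0^*\otimes F_3^*\otimes\bigwedge^{r_1+2}F_1 = W_1(a_2)$. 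So the argument closes by checking that under the $\mathfrak{g}(T_{p,q,r})$-action the degree-one generator of each critical representation maps isomorphically (it is one-dimensional as an isotypic space, by the tables) onto the span of the corresponding comparison tensor, which follows because both are the unique $\prod GL(F_i)$-subrepresentation of that isotype inside ${\hat R}_{gen}$ in that degree.

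The main obstacle I expect is the last identification — pinning down the exact $F_0^*$-decoration in (3) and making sure the conventions for the Buchsbaum--Rim comparison maps $v_1,v_2,v_3$ (which involve several $\bigwedge^{r_1}F_0^*$ and $\bigwedge^{r_1+1}$ factors) are normalized so that they land precisely in $W_1(d_3)$, $W_1(d_2)$, $W_1(a_2)$ and not in a twist by a power of $\det F_0$ or $\det F_1$. This is a matter of carefully tracking the determinant twists already built into the definition of $R_a$ via $\phi(\mu)$ and $\tau(\mu)$, and of recalling that the format relation $f_0 = r_1 + r_2$ together with $f_1 = p+q$, $r_1 = p-1$ fixes all the ranks of the exterior powers involved. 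Once the conventions are lined up, statements (1)--(3) are forced, just as in the $p=2$ case, by the one-dimensionality of the relevant isotypic components recorded in the tables.
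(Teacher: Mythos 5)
Your approach is essentially the paper's: the proof given there is a one-line appeal to the parabolic BGG complex (citing \cite{Ku}, Section 9.2), which is exactly the engine of your argument, and your additional matching with the Buchsbaum--Rim comparison maps $v_1,v_2,v_3$ is just an elaboration of what the statement already asserts. One small bookkeeping correction: the third critical representation in this proposition is $W(a_2)$, i.e.\ the weight with $b=1$ and all other parameters zero, not $\gamma=(1)$ (which is $W(d_1)$); this does not affect the BGG computation you outline, but it is where the $F_0^*$ and $F_3^*$ decorations in item (3) actually come from.
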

\begin{proof} The formulas follow immediately from the parabolic BGG complex. See \cite{Ku}, Section 9.2.
\end{proof}

For the Dynkin formats with $p=2$, notice that, because three critical representations are finite dimensional, they have top graded components. Moreover, since critical representations are either self dual or (in the case of $D_n$ or $E_6$) possibly dual to each other,
we have the following. 

\begin{prp} Let us consider Dynkin format, with the exception of $(1,n-1,1)$ with $n$ odd.
Then the top components $W_{top}(d_3)$, $W_{top}(d_2)$, $W_{top}(a_2)$ give tensors which can be interpreted as the differentials in the complex
$$\FF^{top}_\bullet: F^*_3\buildrel{\partial_3}\over\rightarrow F_2\buildrel{\partial_2}\over\rightarrow F^*_1\buildrel{\partial_1}\over\rightarrow F_0,$$
except for the format $(1,4,2)$ when we get a complex
$$\FF^{top}_\bullet: F^*_3\rightarrow F^*_2\rightarrow F^*_1\rightarrow F_0.$$
\end{prp}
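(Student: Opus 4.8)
The plan is to read off the top graded components of the three critical representations directly from the tables compiled in Sections \ref{section:dn}--\ref{section:e8}, and to check that in every Dynkin case (except the two stated exceptions) they assemble into the asserted complex $\FF^{top}_\bullet$. First I would recall from the discussion preceding Proposition \ref{prp-aa} and the remarks after it that, for $p=2$, the representation $W(d_3)$ has degree-one component $F_2^*\otimes\bigwedge^{r_1+1}F_1 = F_2^*\otimes\bigwedge^2 F_1$ (playing the role of $\partial_1$ after dualizing appropriately), $W(d_2)$ has degree-one component $F_2\otimes F_3^*\otimes F_1$, and $W(a_2)$ has degree-one component $F_3^*\otimes\bigwedge^3 F_1$; this is the content of the two preceding propositions together with the parabolic BGG complex (\cite{Ku}, Section 9.2). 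Since the Dynkin diagram is finite, each $V(\omega)$ is finite dimensional, hence so is each $W(\mu)$, and therefore each critical representation has a well-defined top graded component, obtained from the bottom one by applying the longest element $w_0$ of the Weyl group of $\fg(T_{p,q,r})$.

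The key computation is then: apply $w_0$ (equivalently, invoke the graded self-duality or mutual duality of the critical representations noted just before the statement) to the bottom components. Concretely, for a type $D_n$, $E_6$, $E_7$, $E_8$ Dynkin format with $p=2$, the duality on $\fg$ sends $F_1 \mapsto F_1^*$ when the relevant diagram automorphism is nontrivial (types $D_n$ with $n$ odd and $E_6$), and fixes $F_1$ otherwise; correspondingly it sends $F_3 \mapsto F_3^*$ or fixes it. Carrying $F_2^*\otimes\bigwedge^2 F_1$, $F_2\otimes F_3^*\otimes F_1$, $F_3^*\otimes\bigwedge^3 F_1$ through this involution and matching the resulting spaces against the last rows of the tables for $V(\omega_{z_r})$, $V(\omega_{y_q})$, $V(\omega_{x_p})$ respectively, I would verify that the top components are precisely the spaces of maps $F_3^* \to F_2$, $F_2 \to F_1^*$, and $F_1^* \to F_0$ needed to form $\FF^{top}_\bullet$: $F_3^*\buildrel{\partial_3}\over\rightarrow F_2\buildrel{\partial_2}\over\rightarrow F_1^*\buildrel{\partial_1}\over\rightarrow F_0$. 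For the anomalous format $(1,4,2)$ the diagram is $E_6$ graded so that the relevant fundamental representation acquires an extra dualization, which replaces $F_2$ by $F_2^*$ and forces the complex $F_3^*\rightarrow F_2^*\rightarrow F_1^*\rightarrow F_0$; this single case has to be inspected by hand from its table. The exception $(1,n-1,1)$ with $n$ odd is excluded because there the middle critical representation is not self-dual in the required way and no such complex of the stated shape exists.

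The main obstacle I anticipate is bookkeeping the dualities consistently: one must keep straight (i) the distinction between lowest-weight and highest-weight conventions used for $V_\lambda$ in Proposition \ref{prp-decrgen}, (ii) the action of $w_0$ on fundamental weights, which is trivial for $E_7$, $E_8$ and $D_n$ with $n$ even but nontrivial (the diagram flip) for $E_6$ and $D_n$ with $n$ odd, and (iii) the induced map $F_1 \mapsto F_1^*$, $F_3 \mapsto F_3^*$ on the Levi factor $\mathfrak{gl}(F_1)\times\mathfrak{gl}(F_3)$. Once these are pinned down, the identification of the top components with $\Hom(F_3^*,F_2)$, $\Hom(F_2,F_1^*)$, $\Hom(F_1^*,F_0)$ is immediate from the tables, and the interpretation of these tensors as differentials $\partial_i$ follows, exactly as in the two preceding propositions, from the parabolic BGG complex. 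I would therefore organize the proof as: (1) finiteness and existence of top components; (2) general identification of bottom components via BGG; (3) application of $w_0$ and the case division $E_7$/$E_8$/$D_{\text{even}}$ versus $E_6$/$D_{\text{odd}}$; (4) the exceptional format $(1,4,2)$ checked directly; (5) conclusion that the three top tensors compose to $\FF^{top}_\bullet$.
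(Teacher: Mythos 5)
Your identification of the top graded components is essentially the paper's argument: the paper also obtains them by inspection of the tables, and your organization via the action of $w_0$ (self-duality for $E_7$, $E_8$, $D_{\text{even}}$; the diagram flip for $E_6$ and $D_{\text{odd}}$, which accounts for both stated exceptions) is a reasonable way to systematize that inspection. That part of your plan is fine.

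However, there is a genuine gap: you never prove that the sequence you assemble is actually a \emph{complex}, i.e.\ that $\partial_2\partial_3=0$ and $\partial_1\partial_2=0$. Showing that $W_{top}(d_3)$, $W_{top}(d_2)$, $W_{top}(a_2)$ are tensors in $\Hom[\CC]{F_3^*}{F_2}$, $\Hom[\CC]{F_2}{F_1^*}$, $\Hom[\CC]{F_1^*}{F_0}$ only establishes that they can be arranged in a sequence with matching sources and targets; it says nothing about the composites vanishing, and your step (5) simply asserts the conclusion. The paper closes this gap by a representation-theoretic argument: the composites $\partial_2\partial_3$ and $\partial_1\partial_2$ are $\prod_i GL(F_i)$-equivariant tensors arising as products inside ${\hat R}_{gen}$, hence they must lie in isotypic components of the decomposition of ${\hat R}_{gen}$ given in Proposition~\ref{prp-decrgen}; one checks that no summand of that decomposition has the $GL(F_0)\times\cdots\times GL(F_3)$-type of such a composite, so both composites are forced to be zero. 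You need this observation (or some substitute for it) to conclude; the parabolic BGG complex, which you invoke for the \emph{bottom} components in the two preceding propositions, does not by itself give the vanishing of the compositions of the \emph{top} components.
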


\begin{proof} This follows from a careful examination of the tables. The fact that we get such complexes follows from the decomposition of ${\hat R}_{gen}$ given in Proposition \ref{prp-decrgen} into irreducibles. One can see we do not have in ${\hat R}_{gen}$ representations which could be tensors giving compositions $\partial_2\partial_3$ and $\partial_1\partial_2$.
\end{proof}

The case of almost complete intersections (proved in \cite{CVW18}) is the basis for the following conjecture.

\begin{conj}[\cite{CVW18}] Assume we deal with Dynkin format. The open set $U_{CM}$ of points in $Spec({\hat R}_{gen})$ at which $\FF^{gen}_\bullet$ is a resolution of a Cohen-Macaulay module consists of points where the complex $\FF^{top}_\bullet$ is split exact.
\end{conj}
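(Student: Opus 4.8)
The plan is to recast both sides of the claimed identity as explicit open conditions on $\operatorname{Spec}({\hat R}_{gen})$ and then establish the two inclusions, with the symmetry of the degeneration ${\hat R}_{spec}$ as the principal tool. On the ``top complex'' side, the preceding proposition --- together with its proof, which records that ${\hat R}_{gen}$ contains no representation that could serve as a nonzero $\partial_2\partial_3$ or $\partial_1\partial_2$ --- says that $\FF^{top}_\bullet$ is literally a complex whose differentials $\partial_3\in W_{top}(d_3)$, $\partial_2\in W_{top}(d_2)$, $\partial_1\in W_{top}(a_2)$ are homogeneous elements of ${\hat R}_{gen}$. Thus ``$\FF^{top}_\bullet$ is split exact at $x$'' is precisely the open condition that the images of $\partial_1,\partial_2,\partial_3$ in the residue field $\kappa(x)$ attain their maximal possible ranks, and its complement is a union of three determinantal loci. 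On the other side the Buchsbaum--Eisenbud acyclicity criterion turns ``$\FF^{gen}_\bullet$ is a resolution'' into rank and depth conditions on the ideals of minors of $d_1,d_2,d_3$, and on the resulting acyclicity locus the module $\operatorname{coker}(d_1)$ is Cohen--Macaulay at $x$ exactly where $\operatorname{grade}\operatorname{Fitt}_0(\operatorname{coker} d_1)$ attains its maximal value $3$; since ${\hat R}_{gen}$ is excellent (and, by its geometric description in Theorem~9.1 of \cite{JWm18}, normal), this set $U_{CM}$ is open. So the assertion is an equality of two open subsets.

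To prove it I would pass to ${\hat R}_{spec}$, which by Theorem~9.1 of \cite{JWm18} is a flat degeneration of ${\hat R}_{gen}$ --- concretely the associated graded ring for the filtration underlying the grading $W(\mu)=\bigoplus_i W_i(\mu)$ --- and which carries an action of a group with Lie algebra ${\mathfrak g}(T_{p,q,r})\times\mathfrak{gl}(F_0)\times\mathfrak{gl}(F_2)$. In the Dynkin case ${\mathfrak g}(T_{p,q,r})$ is a finite-dimensional semisimple Lie algebra $G$, every $V_\lambda$ appearing in Proposition~\ref{prp-decrgen} is finite dimensional, the action is multiplicity free, and one should check that $\operatorname{Spec}({\hat R}_{spec})$ is then a finite union of orbits with a dense open one. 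I expect $\{\FF^{top}_\bullet\ \text{split exact}\}$ (for the degenerated top complex) to be exactly the complement of a proper $G$-stable closed subset --- the ``boundary'' --- so that on its open complement it suffices to verify at a single, explicitly chosen point that the universal complex resolves a Cohen--Macaulay module; equivariance then spreads this over the orbit, and because acyclicity and ``the grade of the Fitting ideal equals $3$'' are properties that lift from an associated graded ring to the original, one obtains the inclusion $\{\FF^{top}_\bullet\ \text{split exact}\}\subseteq U_{CM}$ back on ${\hat R}_{gen}$.

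For the reverse inclusion $U_{CM}\subseteq\{\FF^{top}_\bullet\ \text{split exact}\}$ I would argue contrapositively at the generic point of each irreducible component of the complement of the split-exact locus: there one of $\partial_1,\partial_2,\partial_3$ has less than maximal rank, and from the explicit shape of $\FF^{gen}_\bullet$ furnished by the parabolic BGG complex (\cite{Ku}, Section~9.2) one reads off that this rank drop forces $\operatorname{grade}\operatorname{Fitt}_0(\operatorname{coker} d_1)<3$, i.e. failure of Cohen--Macaulayness; since $U_{CM}$ is open, this suffices. The two exceptions built into the statement --- omitting $(1,n-1,1)$ with $n$ odd, and the altered shape $F_3^*\to F_2^*\to F_1^*\to F_0$ for $(1,4,2)$ --- trace back to whether the two outer critical representations are self-dual or dual to one another, and to a parity in their top weights, and would be dealt with directly from the tables of Sections~\ref{section:dn}--\ref{section:e8}, the $D_n$ instances being exactly the almost complete intersection case already settled in \cite{CVW18}.

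The hard part, and the reason this appears as a conjecture, is the reverse inclusion together with uniformity across types. In \cite{CVW18} the relevant $D_n$ family is handled essentially by hand, and to push the argument through for all Dynkin formats one really wants a conceptual description of the boundary stratification of $\operatorname{Spec}({\hat R}_{gen})$ --- most plausibly read off from the Kac--Moody orbit picture on ${\hat R}_{spec}$ --- matched stratum by stratum with the rank-drop loci of the $\partial_i$, rather than a type-by-type computation. Producing that matching, and confirming that the degeneration ${\hat R}_{gen}\rightsquigarrow{\hat R}_{spec}$ is well enough behaved to transport the pointwise Cohen--Macaulay condition in the direction needed, are where I expect the genuine difficulty to lie.
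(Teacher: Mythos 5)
This statement is presented in the paper as an open \emph{conjecture}: it is attributed to \cite{CVW18}, a paper listed as ``in preparation,'' and the only supporting evidence the authors offer is that the special case of almost complete intersections has been proved there. The paper contains no proof, so there is nothing to compare your argument against; the relevant question is only whether your proposal actually closes the conjecture. It does not, and to your credit you say so explicitly in your final paragraph. What you have written is a reasonable plan of attack, but every step that would constitute the mathematical content is deferred.

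Concretely, the gaps are these. First, the claim that $\operatorname{Spec}({\hat R}_{spec})$ is a finite union of orbits with a dense open one is asserted, not proved; multiplicity-freeness of the decomposition in Proposition \ref{prp-decrgen} is suggestive of a spherical-variety picture, but you would need to establish finiteness of orbits and identify the boundary strata, and nothing in the paper supplies this. Second, and more seriously, the conjecture lives on $\operatorname{Spec}({\hat R}_{gen})$, not on its degeneration, so your strategy requires transporting the conditions ``$\FF^{gen}_\bullet$ is acyclic with Cohen--Macaulay cokernel'' and ``$\FF^{top}_\bullet$ is split exact'' back and forth across the degeneration ${\hat R}_{gen}\rightsquigarrow{\hat R}_{spec}$. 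Statements like ``grade of the Fitting ideal lifts from the associated graded ring'' go in one direction only and under hypotheses you have not verified; the reverse inclusion $U_{CM}\subseteq\{\FF^{top}_\bullet\ \text{split exact}\}$ is exactly where this transport must go the hard way, and your treatment of it (``argue contrapositively at the generic point of each component\ldots one reads off that this rank drop forces $\operatorname{grade}\operatorname{Fitt}_0<3$'') restates the conjecture rather than proving it: the stratum-by-stratum matching between the orbit boundary and the rank-drop loci of the $\partial_i$ \emph{is} the conjecture. Until that matching is produced --- even in a single new case beyond the almost complete intersections of \cite{CVW18} --- the statement remains open, and your text should be read as a research program rather than a proof.
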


%\pagebreak

\section{Appendix} \label{sec-appendix}

In this appendix, we explain how one can generate the tables in the previous sections using SageMath \cite{sage}. Our method uses {\em crystals} introduced by M. Kashiwara \cite{Ka}. A systematic introduction to the theory of crystals can be found in \cite{HK}.  

As all the cases are similar, we only present a SageMath code for the table in Section \ref{E7V7}.

\begin{lstlisting}
sage: La=RootSystem(['E',7]).weight_space().fundamental_weights()
sage: B=crystals.LSPaths(La[7])
sage: for x in B:
....:     if x.is_highest_weight([1,2,3,4,6,7]):
....:         a=x.to_highest_weight()[1]
....:         l=len(a); d=0
....:         for j in range (0,l):
....:             if a[j]== 5:
....:                 d=d+1
....:         print d, ",", x.weight()
....:         
0 , Lambda[7]
1 , Lambda[4] - Lambda[5]
2 , Lambda[1] - Lambda[5] + Lambda[6]
3 , Lambda[2] - Lambda[5] + Lambda[7]
4 , Lambda[3] - Lambda[5]
5 , -Lambda[5] + Lambda[6]
\end{lstlisting}
Each line of the outcome shows the degree and highest weight of an irreducible component. One can convert a highest weight into a pair of partitions. For example, the highest weight 
\begin{center}
{\fontfamily{qcr}\selectfont
Lambda[1] - Lambda[5] + Lambda[6]
}
\end{center}
of degree $2$ corresponds to $(1,0,0) \ (1,1,1,1,0)$ as {\fontfamily{qcr}\selectfont Lambda[6]} becomes the first fundamental weight of $\mathfrak{gl}(3)$ and {\fontfamily{qcr}\selectfont Lambda[1]} the last fundamental weight of $\mathfrak{gl}(5)$. (See the Dynkin diagram in Section \ref{E7A5}.)

\bibliographystyle{amsplain}

\providecommand{\MR}[1]{\mbox{\href{http://www.ams.org/mathscinet-getitem?mr=#1}{#1}}}
\renewcommand{\MR}[1]{\mbox{\href{http://www.ams.org/mathscinet-getitem?mr=#1}{#1}}}
\providecommand{\href}[2]{#2}

\end{document}